\documentclass[12pt,reqno]{amsart}

\usepackage[utf8]{inputenc}
\usepackage{amsfonts, color,epsf}
\usepackage{amsthm} 
\usepackage{amssymb} 
\usepackage{xspace}
\usepackage{xcolor}
\usepackage[margin=1in]{geometry} 
\usepackage{bm}
\usepackage{boxedminipage} 
\usepackage{enumerate}
\usepackage{comment} 
\usepackage{todonotes}
\usepackage{stmaryrd} 
\usepackage{float}
\usepackage{enumitem}
\usepackage{cancel}
\usepackage{tikz}
    \usetikzlibrary[backgrounds, fit, intersections, calc, decorations.markings]

\usepackage{pgfplots}
    \pgfplotsset{compat=1.6}

\usepackage[pdftex,bookmarks=true,pdfstartview=FitH,colorlinks,linkcolor=blue,filecolor=blue,citecolor=blue,urlcolor=blue,pagebackref=true]{hyperref}
\usepackage[lambda,advantage,operators,sets,adversary,landau,probability,notions,logic,ff,mm,primitives,events,complexity,asymptotics,keys]{cryptocode}

\newlength{\oldparindent}
\theoremstyle{plain}
    \newtheorem{theorem}{Theorem}

    \newtheorem{lemma}{Lemma}[section]
    \newtheorem{proposition}[lemma]{Proposition}

\theoremstyle{definition}
    
    \newtheorem{claim}[lemma]{Claim}

\providecommand{\namedref}[2]{%
  \expandafter\hyperref\expandafter[#2]{#1~\ref*{#2}}\xspace%
}
\providecommand{\theoremref}[1]{\namedref{Theorem}{thm:#1}}
\providecommand{\propref}[1]{\namedref{Proposition}{prop:#1}}

\providecommand{\claimref}[1]{\namedref{Claim}{clm:#1}}
\providecommand{\figureref}[1]{\namedref{Figure}{fig:#1}}
\providecommand{\sectionref}[1]{\namedref{Section}{sec:#1}}

\providecommand{\equationref}[1]{\namedref{Equation}{eq:#1}}

\providecommand{\cN}{\ensuremath{\mathcal N}\xspace}

\providecommand{\ie}[0]{\text{i.e.}\xspace}

\let\leq\leqslant
\let\geq\geqslant

\providecommand{\defeq}[0]{\ensuremath{\;\coloneqq\;}\xspace}
\providecommand{\eqdef}[0]{\ensuremath{\;\eqqcolon\;}\xspace}

\providecommand{\p}[1]{\ensuremath{^{\left(#1\right)}}\xspace}

\let\phi\varphi

\usepackage{lineno}
\providecommand{\ba}[0]{\ensuremath{\mathbf a}\xspace}
\providecommand{\bu}[0]{\ensuremath{\mathbf u}\xspace}
\providecommand{\bw}[0]{\ensuremath{\mathbf w}\xspace}
\providecommand{\btheta}[0]{\ensuremath{\bm{\theta}}\xspace}
\providecommand{\bTheta}[0]{\ensuremath{\bm{\Theta}}\xspace}

\providecommand{\trace}[0]{\ensuremath{\mathrm{tr}}\xspace}

\providecommand{\dd}[0]{\ensuremath{\mathrm d}\xspace}
\providecommand{\close}[0]{\ensuremath{\mathrm{close}}\xspace}
\providecommand{\far}[0]{\ensuremath{\mathrm{far}}\xspace}
\providecommand{\diag}[0]{\ensuremath{\mathrm{diag}}\xspace}
\providecommand{\wt}[0]{\ensuremath{\mathrm{wt}}\xspace}

\providecommand{\mye}[0]{\ensuremath{\mathrm{e}}\xspace}

\providecommand{\gelfond}{Gel'fond\xspace}

\providecommand{\mybigO}[2]{\ensuremath{\mathcal O_{#1}\left({#2}\right)}\xspace}

\let\originalleft\left
\let\originalright\right
\renewcommand{\left}{\mathopen{}\mathclose\bgroup\originalleft}
\renewcommand{\right}{\aftergroup\egroup\originalright}
\makeatletter
\renewcommand\paragraph{
  \@startsection{paragraph}{4}{0pt}%
  {1ex plus 0.25ex minus .1ex}% 1st curly brace: space before
  {-1em}%                      2nd curly brace: space after
  {\normalfont\normalsize\bfseries}
}
\makeatother

\renewcommand*\backref[1]{\ifx#1\relax \else $\uparrow$#1 \fi}

\title[Energy Estimation of the Hamming Slice]{Energy Estimation of the Hamming Slice and its Applications}

\author[A. Biswas]{Aniruddha Biswas}
\address{Department of Computer Science, Purdue University, West Lafayette, IN 47907, USA}
\email{biswas62@purdue.edu}

\author[J. Hwang]{Jihun Hwang}
\address{Department of Computer Science, Purdue University, West Lafayette, IN 47907, USA}
\email{hwang102@purdue.edu}

\author[H. K. Maji]{Hemanta K. Maji}
\address{Department of Computer Science, Purdue University, West Lafayette, IN 47907, USA}
\email{hmaji@purdue.edu}

\author[I. D. Shkredov]{Ilya D. Shkredov}
\address{Department of Mathematics, Purdue University, West Lafayette, IN 47907, USA}
\email{ishkredo@purdue.edu}

\author[X. Ye]{Xiuyu Ye}
\address{Department of Computer Science, Purdue University, West Lafayette, IN 47907, USA}
\email{ye151@purdue.edu}

\date{\today}

\begin{document}

\keywords{Additive energy, Hamming slices, sum-of-digits function, finite cyclic groups, sumsets, ternary representations, cyclic carry automata, transfer matrices}
\subjclass[2020]{Primary 11B30; Secondary 11A63, 11B13, 11B34}

\maketitle

\begin{abstract}
Let $R=\ZZ/(2^n-1)\ZZ$, where $n\geq 3$, and let $S_w\subseteq R$ be the residues whose canonical $n$-digit binary expansion has Hamming weight $w$. 
We obtain, in particular, an asymptotic formula for the additive energy of $S_w$
\[
  E(S_w)=\frac{\abs{S_w}^4}{|R|}+ \bigO{\abs R^3n^{-3}},
\]
which holds uniformly in $w$.
The error term is optimal in order, with a matching lower bound for $w=\floor{ n/2+\sqrt{n} }$.
It follows that triple sums of arbitrary unit dilates have asymptotically uniform representation counts when $\prod_{j=1}^{3} \abs{S_{w_j}} /\left(\abs R n^{-3/5}\right)^3\to\infty$,
and that double sums have asymptotically full support when $\abs{S_{w_1}} \abs{S_{w_2}}/\left(\abs Rn^{-3/4}\right)^2\to\infty$.
In the proof, modular collisions are represented using a cyclic binary carry automaton; this appears to be a novel approach in this area of problems.
\end{abstract}

\tableofcontents

\section{Introduction}
\label{sec:intro}

How much additive structure does a fixed binary digit sum force? 
The constraint is combinatorial; its additive consequences depend on the ambient group.

The arithmetic study of digit restrictions goes back to \gelfond~\cite{Gelfond1968}.
Mauduit and S\'ark\"ozy~\cite{MauduitSarkozy1997} and Mauduit, Pomerance, and S\'ark\"ozy~\cite{MauduitPomeranceSarkozy2005} studied the distribution of fixed-digit-sum sets in residue classes.
Related directions include missing digits~\cite{Konyagin2001}, digit restrictions on primes~\cite{MauduitRivat2010,Maynard2019}, and Waring's problem with restricted digits~\cite{Green2025}.
Our focus is the additive structure of fixed-digit-sum sets.
Rather than constrain the digit sum of the \emph{output} of addition, as in~\cite{MauduitRivatSarkozy2017}, we prescribe the digit sums of the \emph{inputs} and count additive collisions and representations
in a cyclic group.

Cryptography provides a further motivation.
A standard defense splits a secret into shares chosen uniformly subject to their sum equaling the secret modulo a fixed integer.
Any proper subset of the shares reveals nothing about the secret, yet a device's power consumption may expose the Hamming weight of every share~\cite{EC:FMMOS24}.
The information revealed by these side-channel observations depends on how uniformly sums from the corresponding Hamming slices represent the possible secrets.

In this paper we study the underlying additive questions in
\[
     N=2^n-1,\qquad R=\ZZ/N\ZZ,
\]
where $n\geq3$. 
Neither $n$ nor $N$ is assumed to be prime. 
We identify each residue with its representative in $\{0,\ldots,2^n-2\}$ and put
\[
     W=\{0,\ldots,n-1\},\qquad
     S_w=\{x\in R \colon \wt(x)=w\},\qquad 
     \abs{S_w}=\binom nw,
\]
where $\wt(x)$ is the number of ones in the canonical $n$-digit binary expansion of $x$. 
The modulus preserves an important part of the digit structure: multiplication by $2$ cyclically rotates the digits, so $2S_w=S_w$. 
Addition behaves differently. 
Carries propagate between digit positions, including from the last position back to the first.

We measure the additive structure of $S_w$ through its \emph{additive energy}.
For $A\subseteq R$, write
\[
    E(A)=\abs{ \vphantom{2^{2^2}} \left\{(a,b,c,d)\in A^4 \colon a+b=c+d\right\}} .
\]
Cauchy--Schwarz gives $E(A)\geq |A|^4/N$; the difference measures the mean-square deviation of the two-term representation function from its average. 
Our principal result determines the optimal order of an upper bound for this difference that is uniform over all Hamming slices. 
For central slices, the energy is within a relative $\bigO{n^{-1}}$ of the lower bound, and this order cannot be improved uniformly in the weight. 
The same estimate yields asymptotically uniform representation counts for three arbitrary unit dilates and asymptotically full support for two.

\subsection{Main results}
\label{sec:main-results}
For subsets $A_1,\ldots,A_k\subseteq R$, let
\[
    r_{A_1+\cdots+A_k}(x)
        =\abs{ \left\{ (a_1,\ldots,a_k)\in A_1\times\cdots\times A_k \colon
                  a_1+\cdots+a_k=x\right\} \vphantom{2^{2^2}}},
\]
and write $\supp{A_1+\cdots+A_k}$ for the set on which this function is positive. 
A unit dilate is $aS_w=\{ax \colon x\in S_w\}$ with $a\in R^\times$. 
All implicit constants below are absolute unless a subscript indicates otherwise. 
Limits are taken as $n\to\infty$ through integers, with the weights and units allowed to vary with $n$.

The ternary representation problem asks whether each $x\in R$ can be written as $a_1s_1+a_2s_2+a_3s_3$, with $s_j\in S_{w_j}$. 
Our first theorem counts these representations uniformly in $x$.

%%% Theorem
\begin{theorem}[Nearly-uniform representation counts]\label{thm:uniform-rep}
    For $a_1,a_2,a_3\in R^\times$, $w_1,w_2,w_3\in W$, and every $x\in R$,
    \[
        \abs{ r_{a_1\cdot S_{w_1} + a_2\cdot S_{w_2} + a_3\cdot S_{w_3}}(x) 
                - \frac{\abs{S_{w_1}}\abs{S_{w_2}}\abs{S_{w_3}} }N } 
            \ll \left(\frac {N}n\right)^{3/2} \cdot 
            \left( {\abs{S_{w_1}}  \abs{S_{w_2}}  \abs{S_{w_3}}} \right)^{1/6}.
    \]
    In particular, if
    \[
        \frac{ {\abs{S_{w_1}}  \abs{S_{w_2}}  \abs{S_{w_3}}} }{ \left(N\cdot n^{-3/5}\right)^3 } \longrightarrow\infty,
    \]
    then, uniformly in $x$,
    \[
        r_{a_1\cdot S_{w_1} + a_2\cdot S_{w_2} + a_3\cdot S_{w_3}}(x) = \left(1+\smallO1\right) \cdot \frac{\abs{S_{w_1}}\abs{S_{w_2}}\abs{S_{w_3}} }N. 
    \]
\end{theorem}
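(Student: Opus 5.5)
We prove the bound by Fourier analysis on $R$, using the energy estimate from the abstract as the only arithmetic input. Write $e(t)=\exp(2\pi i t/N)$ and $\widehat{1_A}(\xi)=\sum_{a\in A}e(-a\xi)$. Then
\[
  r_{a_1S_{w_1}+a_2S_{w_2}+a_3S_{w_3}}(x)=\frac1N\sum_{\xi\in R}\prod_{j=1}^3\widehat{1_{S_{w_j}}}(a_j\xi)\,e(x\xi),
\]
since $\widehat{1_{a_jS}}(\xi)=\widehat{1_S}(a_j\xi)$. The term $\xi=0$ is exactly $\abs{S_{w_1}}\abs{S_{w_2}}\abs{S_{w_3}}/N$. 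The error is therefore at most $N^{-1}\sum_{\xi\neq0}\prod_j\abs{\widehat{1_{S_{w_j}}}(a_j\xi)}$.

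We bound this sum by H\"older with three equal exponents,
\[
  \sum_{\xi\ne0}\prod_{j}\abs{\widehat{1_{S_{w_j}}}(a_j\xi)}\le\prod_j\Bigl(\sum_{\xi\ne0}\abs{\widehat{1_{S_{w_j}}}(\xi)}^3\Bigr)^{1/3},
\]
where we used that $\xi\mapsto a_j\xi$ permutes $R\setminus\{0\}$ because $a_j$ is a unit. This is where invertibility of the $a_j$ enters, and it is why arbitrary unit dilates cost nothing. For one slice $S=S_w$ we then interpolate with Cauchy--Schwarz:
\[
  \sum_{\xi\ne0}\abs{\widehat{1_S}(\xi)}^3\le\Bigl(\sum_{\xi}\abs{\widehat{1_S}(\xi)}^2\Bigr)^{1/2}\Bigl(\sum_{\xi\ne0}\abs{\widehat{1_S}(\xi)}^4\Bigr)^{1/2}.
\]
The first factor equals $(N\abs S)^{1/2}$ by Parseval. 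The second sum is $N E(S)-\abs S^4=N\,(E(S)-\abs S^4/N)$, which is $\ll N\cdot N^3n^{-3}=N^4n^{-3}$ by the main energy estimate. This bound is uniform in $w$, which is essential.

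Putting the pieces together, each factor satisfies $\sum_{\xi\neq0}\abs{\widehat{1_{S_{w_j}}}}^3\ll N^{1/2}\abs{S_{w_j}}^{1/2}\cdot N^{2}n^{-3/2}$. Taking the product of cube roots and dividing by $N$ gives
\[
  N^{-1}\cdot N^{5/2}n^{-3/2}\prod_j\abs{S_{w_j}}^{1/6}=\Bigl(\frac Nn\Bigr)^{3/2}\bigl(\abs{S_{w_1}}\abs{S_{w_2}}\abs{S_{w_3}}\bigr)^{1/6},
\]
which is exactly the claimed bound.

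For the corollary, write $P=\abs{S_{w_1}}\abs{S_{w_2}}\abs{S_{w_3}}$. The ratio of the error to the main term is $N^{5/2}n^{-3/2}P^{-5/6}$. This equals $\bigl(N^3n^{-9/5}/P\bigr)^{5/6}=\bigl((Nn^{-3/5})^3/P\bigr)^{5/6}$, so it tends to $0$ under the stated hypothesis.

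The only substantive ingredient is the uniform energy bound $E(S_w)-\abs{S_w}^4/N\ll N^3n^{-3}$, which comes from the carry-automaton analysis and is taken here as given. Everything else is routine Fourier bookkeeping, so we expect no real obstacle beyond checking that the exponents line up, which they do.
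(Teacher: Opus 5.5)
Your proposal is correct and follows essentially the same route as the paper. You isolate the zero frequency, apply H\"older with three equal exponents (using that unit dilations permute the nonzero frequencies), and bound each $\ell^3$ sum by interpolating between Parseval's $\ell^2$ identity and the $\ell^4$ energy bound of Theorem~\ref{thm:energy}. The only difference is your unnormalized Fourier convention, and your exponent bookkeeping for both the main bound and the corollary checks out.
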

%%% 

For two summands, we bound the number of residues with no representation.
The weights in both results need not coincide, and the dilations are arbitrary units, not only powers of $2$.

%%% Theorem
\begin{theorem}[Near-full support]\label{thm:full-supp}
    For $a_1,a_2\in R^\times$ and $w_1,w_2\in W$,
    \[
        \abs{ R \setminus \supp{a_1\cdot S_{w_1} + a_2\cdot S_{w_2} } \vphantom{2^{2^2}}} 
            \ll N\cdot\min\left\{ 1 , \frac{N^4n^{-3}}{\abs{S_{w_1}}^2\abs{S_{w_2}}^2 }\right\}.
    \]
    Consequently, if
    \[
        \frac{ \abs{S_{w_1} }\abs{ S_{w_2}} }{ \left(N \cdot n^{-3/4} \right)^2 } \longrightarrow\infty,
    \]
    then
    \[
        \abs{ \supp{a_1\cdot S_{w_1} + a_2\cdot S_{w_2} } \vphantom{2^{2^2}}} = \left(1-\smallO1\right)\cdot N.
    \]
\end{theorem}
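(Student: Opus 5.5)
We deduce this from the energy estimate announced in the abstract, namely
\[
  E(S_w)-\frac{\abs{S_w}^4}{N}\ll N^3n^{-3}
\]
uniformly in $w$. We combine it with the standard Fourier second-moment argument for supports. Unit dilation is a bijection of $R$, so for $a\in R^\times$ the set $aS_w$ has the same size and the same energy as $S_w$. Write $A=a_1S_{w_1}$, $B=a_2S_{w_2}$ and $r=r_{A+B}$. The mean of $r$ over $R$ is $\abs A\abs B/N$.

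The main step is a variance bound via Parseval. Expanding $r$ in characters of $R$,
\[
  \sum_x\left(r(x)-\frac{\abs A\abs B}N\right)^2=\frac1N\sum_{\xi\neq0}\abs{\widehat{1_A}(\xi)}^2\abs{\widehat{1_B}(\xi)}^2 .
\]
By Cauchy--Schwarz in $\xi$, the right side is at most the geometric mean of the two quantities $\frac1N\sum_{\xi\ne0}\abs{\widehat{1_A}(\xi)}^4$ and the analogous one for $B$. Each of these equals $E(A)-\abs A^4/N=E(S_{w_1})-\abs{S_{w_1}}^4/N$, respectively the same with $w_2$. The energy theorem therefore gives
\[
  \sum_x\left(r(x)-\frac{\abs A\abs B}N\right)^2\ll N^3n^{-3}.
\]

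Next comes a Chebyshev step. Every $x\notin\supp{A+B}$ contributes $(\abs A\abs B/N)^2$ to the left-hand sum. Hence
\[
  \abs{R\setminus\supp{A+B}}\cdot\frac{\abs A^2\abs B^2}{N^2}\ll \frac{N^3}{n^{3}},
\]
that is, $\abs{R\setminus\supp{A+B}}\ll N\cdot N^4n^{-3}/(\abs{S_{w_1}}^2\abs{S_{w_2}}^2)$. The trivial bound $\le N$ supplies the minimum with $1$.

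For the consequence, the hypothesis makes $\abs{S_{w_1}}^2\abs{S_{w_2}}^2/(N^4n^{-3})\to\infty$, so the exceptional set has size $\smallO{N}$. All the substance lies in the uniform energy estimate, which we assume as stated. The only point to check is that the unified Parseval identity is valid on $R$ for composite $N$. It is, since $R$ is a finite abelian group, and no primality of $N$ or $n$ is needed.
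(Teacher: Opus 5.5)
Your proof is correct and follows essentially the paper's route: both bound the mixed moment $D=\sum_{t\neq 0}|\widehat{S_{w_1}}(a_1t)|^2|\widehat{S_{w_2}}(a_2t)|^2\ll n^{-3}$ by Cauchy--Schwarz and the energy estimate, and then run a second-moment argument on $r$. The differences are cosmetic. In place of your Chebyshev step, the paper uses $(\sum_x r(x))^2\le |S|\sum_x r(x)^2$, where $S$ is the support, which gives the marginally sharper bound $|R\setminus S|\le N D/(\delta_{w_1}^2\delta_{w_2}^2+D)$. Your factors of $1/N$ come from an unnormalized transform rather than the paper's normalized $\widehat{A}$, which is harmless since you use that convention consistently.
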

%%%

The common input is a sharp fourth-moment estimate. 
To state it, set $\mye_N(x)=\exp\left(2\pi \imath x/N\right)$ and use the normalization
    \[
         \widehat f(t)=\frac1N\sum_{x\in R}f(x)\mye_N(-xt),\qquad
         \widehat S_w=\widehat{1_{S_w}},\qquad
         \delta_w=\frac{\abs{S_w}}{N}.
    \]
Fourier inversion and Parseval give
%%%
\begin{equation}\label{eq:energy-identity}
    E(S_w)=\frac{\abs{S_w}^4}{N}
            + N^3\sum_{t\neq 0}\abs{\widehat S_w(t)}^4.
\end{equation}
%%%
Thus it is the nonzero fourth moment, rather than the full energy, that must be estimated to control the excess over the uniform baseline.

%%% Theorem
\begin{theorem}[Energy estimate]\label{thm:energy}
    For every $w\in W$,
    \[
        \sum_{t\neq 0} \abs{ \widehat{S_w}(t) } ^4  \ll n^{-3} .
    \]
    Equivalently,
    \[
        E(S_w) = \frac1N \binom nw^4 +\bigO{N^3n^{-3}}.
    \]
\end{theorem}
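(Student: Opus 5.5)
The plan is to compute $E(S_w)$ directly as a weighted count of carry paths, rather than estimating the Fourier coefficients $\widehat{S_w}(t)$ one by one. Fourier inversion and Parseval give \eqref{eq:energy-identity}, so the theorem is equivalent to
\[
0\leq E(S_w)-\binom nw^4/N\ll N^3n^{-3}.
\]
The lower bound $0\leq$ is immediate from \eqref{eq:energy-identity}, since the Fourier sum is nonnegative. I will prove the upper bound in two regimes, depending on how far $w$ is from $n/2$.

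\emph{Tail regime.} Suppose $\abs{w-n/2}\geq C\sqrt{n\log n}$ for a large constant $C$. Then $\binom nw\leq 2^n/n\ll N/n$. The trivial bound $E(S_w)\leq\abs{S_w}^3$, together with $\abs{S_w}^4/N\leq\abs{S_w}^3$, already gives the difference $\ll N^3n^{-3}$. So only the range $\abs{w-n/2}\leq C\sqrt{n\log n}$ needs a genuine estimate.

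\emph{Encoding by a cyclic carry automaton.} Addition in $R$, with representatives in $\{0,\dots,2^n-2\}$, is binary addition with end-around carry. Thus $a+b=s$ in $R$ means there is a cyclic carry sequence $(\gamma_i)_{i\in\ZZ/n}\in\{0,1\}^n$ with $a_i+b_i+\gamma_{i-1}=s_i+2\gamma_i$ for all $i$. Two caveats need care: the representation $0\equiv 2^n-1$, and the non-uniqueness of the carry sequence in the all-ones situation. Both affect only $O(1)$-dimensional degenerate families, which contribute $\ll 4^n\ll N^3n^{-3}$. Eliminating the common sum $s$, a solution of $a+b=c+d$ becomes a closed walk of length $n$ on the four carry states $(\gamma,\gamma')$, with digit tuples $(a_i,b_i,c_i,d_i)$ satisfying
\[
a_i+b_i+\gamma_{i-1}-2\gamma_i=c_i+d_i+\gamma'_{i-1}-2\gamma'_i.
\]
Mark each digit tuple by $z_1^{a_i}z_2^{b_i}z_3^{c_i}z_4^{d_i}$. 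This gives a $4\times4$ transfer matrix $M(\mathbf z)$, and
\[
E(S_w)=[z_1^wz_2^wz_3^wz_4^w]\,\trace M(\mathbf z)^n+O(4^n).
\]
Putting $z_j=\mye^{\imath\theta_j}$ turns this into a Cauchy integral over $\mathbb T^4$ of $\sum_k\lambda_k(\btheta)^n\mye^{-\imath w\sum\theta_j}$. At $\btheta=0$ the Perron eigenvalue is $8$, since the unweighted count is of order $N^3$.

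\emph{Saddle point analysis.} I would then carry out the following steps.
\begin{enumerate}[label=(\roman*)]
\item Show that $\abs{\lambda_k(\btheta)}\leq 8(1-c\,\|\btheta\|^2)$ away from $\btheta=0$, modulo any trivial symmetry lattice. This needs an aperiodicity check: verify that no nonzero $\btheta$ makes every weighted path through the automaton have equal phase. The subdominant eigenvalues are then exponentially smaller and contribute $\ll (8-c)^n$.
\item Near $0$, expand $\log(\lambda(\btheta)/8)$ to fourth order. The linear term is the mean digit density $1/2$ in each coordinate. The quadratic form is the covariance of the digit process, which has coupling terms between the four coordinates coming from carries.
\item Evaluate the Gaussian integral with an Edgeworth expansion to one correction term. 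This yields
\[
E(S_w)=\frac{8^n}{n^2}\Bigl(G(x)+n^{-1}P(x)G(x)\Bigr)+O\bigl(8^nn^{-3}\bigr)\quad\text{uniformly in }x=(w-n/2)/\sqrt n,
\]
with $G$ Gaussian and $P$ a polynomial.
\end{enumerate}

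\emph{Comparison with the main term.} Next, expand $\binom nw^4/N$ to the same order using Stirling's formula: $8^nn^{-2}\tilde G(x)\bigl(1+n^{-1}Q(x)\bigr)+O(8^nn^{-3})$. The key identity to verify is $G=\tilde G$. This should hold because the leading Gaussian is $\tilde G$ provided the covariance of the four weight marginals, restricted to the collision event, has no cross terms at the relevant scale. This is the heuristic that the weights of $a,b,c$ are asymptotically independent of the event $a+b=c+d$. The $n^{-1}$ corrections need not cancel: since $(P-Q)\tilde G$ is bounded, their difference contributes $8^nn^{-3}\asymp N^3n^{-3}$, which is within the claimed error. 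The fact that the $n^{-1}$ corrections survive is also consistent with the error term being sharp at $w=\floor{n/2+\sqrt n}$.

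\emph{Main obstacle.} The hardest step is making the local expansion uniform for $\abs{x}\leq C\sqrt{\log n}$, and proving $G=\tilde G$, which is the exact matching of the covariance matrices. I would attempt the latter by computing the derivatives of the Perron eigenvalue at $0$ explicitly from the $4\times4$ matrix via perturbation theory. The linear term and the diagonal variances $1/4$ should come out exactly, and the carry-induced off-diagonal covariances should be shown to vanish. If they do not vanish, the comparison must instead be made directly between the two Gaussian expressions.
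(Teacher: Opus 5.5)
Your tail regime is correct. Your carry encoding is the paper's collision matrix $B(\bu)$, provided the common value in your displayed digit relation is required to be an output bit in $\{0,1\}$. Without that constraint, one quadruple is counted by many pairs of carry sequences, and the Perron root at $\btheta=\mathbf 0$ exceeds $8$. Your step (i) is what the paper does through the equality case of the Frobenius--Wielandt comparison theorem, which shows $\rho(B(\bu))<8$ for every $\bu\neq\mathbf 1$ on the torus, so there is no nontrivial symmetry lattice.

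The gap is the comparison step, and that step is the real content of the theorem. Write $\Lambda$ for your Perron eigenvalue $\lambda$ and $\Lambda_0=\frac12\prod_j(1+u_j)$. Your route needs the Taylor expansion of $\log\Lambda(\btheta)$ to agree with that of $\log\Lambda_0(\btheta)$ through third order. Agreement of the quadratic parts is your identity $G=\tilde G$. Vanishing of the cubic terms is what justifies the absence of an $n^{-1/2}$ correction in your Edgeworth expansion. Without it, a term of size $N^3n^{-5/2}$ survives, because the symmetric binomial side has no such term. You give only a heuristic for the first point and assume the second without comment. Your fallback also does not exist: if $G\neq\tilde G$ as functions of $x$, then $E(S_w)-\binom nw^4/N$ would be of order $N^3n^{-2}$ for $w$ in a central window. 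So this identity has to be proved, not worked around.

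The missing idea is an exact identity, not a perturbative computation. Suppose one variable equals $1$, say $u_1=1$. For each carry state $(c,d)$ and each choice of digits $b,g,h$, exactly one digit $a$ makes $a+b+c$ and $g+h+d$ have the same parity. Hence every row of $B(\bu)$ sums to $(1+u_2)(1+u_3)(1+u_4)=\Lambda_0(\bu)$. By the spectral separation near $\mathbf 1$, this eigenvalue is $\Lambda(\bu)$. So $\Lambda=\Lambda_0$ on each hyperplane $u_j=1$, and $\Lambda(\bu)-\Lambda_0(\bu)=\prod_{j=1}^4(1-u_j)\,H(\bu)$ with $H$ analytic near $\mathbf 1$.

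This is the exact form of your heuristic that any three summands determine the fourth. It shows that the expansions of $\log\Lambda$ and $\log\Lambda_0$ in $\btheta$ differ only in monomials divisible by $\theta_1\theta_2\theta_3\theta_4$, which gives both facts at once. It also makes the Edgeworth and Stirling expansions, and your split into regimes, unnecessary. The paper compares directly with $[\bu^w]\Lambda_0(\bu)^n=2^{-n}\binom nw^4$. Near $\btheta=\mathbf 0$ it bounds $\abs{\Lambda^n-\Lambda_0^n}\leq n\abs{\Lambda-\Lambda_0}\left(\abs{\Lambda}^{n-1}+\abs{\Lambda_0}^{n-1}\right)\ll n\norm{\btheta}_2^4\,8^n\exp(-cn\norm{\btheta}_2^2)$, and integrates to get $\ll N^3n^{-3}$. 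Since $w$ enters only through a unimodular phase, the bound is uniform in $w$. Replacing $2^{-n}$ by $N^{-1}$ costs nothing, because $0\leq E(S_w)-\binom nw^4/N\leq E(S_w)-\binom nw^4/2^n$.
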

%%%

%%% Theorem
\begin{theorem}[Optimality of the energy error]\label{thm:lower}
    For $n\geq5$, put $w_n=\floor {n/2+\sqrt n}$. 
    Then
    \[
     \sum_{t\neq 0} \abs{\widehat{S_{w_n}}(t)}^4 \gg n^{-3} .
    \]
    In particular,
    \[
        E(S_{w_n})-\frac{\abs{S_{w_n}}^4}{N} \asymp N^3n^{-3}.
    \]
\end{theorem}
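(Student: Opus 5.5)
The plan is to exhibit $n$ distinct nonzero frequencies at each of which $\abs{\widehat{S_{w_n}}(t)}\gg n^{-1}$. Summing fourth powers over these $n$ frequencies then gives $\gg n\cdot n^{-4}=n^{-3}$. The second statement follows from \eqref{eq:energy-identity} combined with \theoremref{energy}.

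\textbf{Reduction to one frequency.} Since $2S_w=S_w$ and $2\in R^\times$, we have $\widehat{S_w}(2^j t)=\widehat{S_w}(t)$ for every $j$. Indeed, $\sum_{x\in S_w}\mye_N(-x2^jt)=\sum_{y\in S_w}\mye_N(-yt)$ via $y=2^jx$. The residues $t=2^j$, $0\le j<n$, are distinct and nonzero modulo $N=2^n-1$. Hence it suffices to prove
\[
\abs{\widehat{S_{w_n}}(1)}\gg n^{-1}.
\]

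\textbf{Computing $\widehat{S_w}(1)$ via a biased-coin heuristic.} Write $x=\sum_{i<n}b_i2^i$, so that
\[
\widehat{S_w}(1)=\frac1N\sum_{x\in S_w}\mye_N(-x)=\delta_w\,\mathbb E\bigl[\mye(-x/N)\bigr],
\]
where the expectation is over a uniform $x\in S_w$. Condition on the top $K$ bits $b_{n-1},\dots,b_{n-K}$, with $K$ a large constant chosen at the end. Given a top pattern of weight $v$, the lower $n-K$ bits form a uniform element of a Hamming slice of $\{0,\dots,2^{n-K}-1\}$ with weight $w-v$.

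First, the contribution of the low bits to $x/N$ is $O(2^{-K})$, so replacing it by its mean, or simply dropping it, costs only $O(2^{-K})$ in the phase. This will be needed with a gain: I will compare with the i.i.d.\ model in which the bits are independent $\mathrm{Bernoulli}(p)$ with $p=w/n=\tfrac12+\varepsilon$ and $\varepsilon\asymp n^{-1/2}$. The conditional probabilities of top patterns equal $p^{v}(1-p)^{K-v}\bigl(1+O_K(n^{-1})\bigr)$, by the hypergeometric-versus-binomial comparison. The low bits of a slice are equidistributed enough that their characteristic function matches the product model up to $O_K(n^{-1})$ plus $O(2^{-K})$ small terms.

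In the i.i.d.\ model,
\[
\mathbb E\,\mye(-x/N)\approx\prod_{k\ge1}\Bigl(\tfrac{1+\mye(-2^{-k})}{2}+\varepsilon\bigl(\mye(-2^{-k})-1\bigr)\Bigr).
\]
The $k=1$ factor equals exactly $-2\varepsilon$. The remaining factors are $\cos(\pi2^{-k})\mye(-2^{-k-1})+O(\varepsilon)$, and their product converges to a nonzero constant ($\prod_{k\ge2}\cos(\pi 2^{-k})=2/\pi$). Hence $\abs{\mathbb E\,\mye(-x/N)}\asymp\varepsilon\asymp n^{-1/2}$. Combined with $\delta_w\asymp n^{-1/2}$ (here $w-n/2\asymp\sqrt n$, so $\binom nw 2^{-n}\asymp e^{-2}n^{-1/2}$), this gives $\abs{\widehat{S_{w_n}}(1)}\asymp n^{-1}$.

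\textbf{Main obstacle.} The delicate step is making the slice-versus-product comparison rigorous with an error $o(n^{-1/2})$, since the main term is itself only of size $n^{-1/2}$. The uniform ($p=\tfrac12$) part vanishes only through cancellation, so all $O(1)$-relative errors must be avoided.

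To handle this, I will write the slice exactly as a mixture over the weight of the top $K$ bits. I then expand the hypergeometric weights to first order in $\varepsilon$ and $1/n$; the first-order term is linear in $\varepsilon$ and reproduces $-2\varepsilon\cdot(\text{constant})$, while second-order terms are $O(\varepsilon^2+n^{-1})=O(n^{-1})$. For the low part, I use the bound $\abs{\mathbb E\,\mye(-y/N)}$ over a near-central slice of length $n-K$. This equals the same product structure with its own leading factor, of size $O(n^{-1/2})$, uniformly for weights $w-v$ with $v\le K$. Its contribution is thus damped further by the $2^{-K}$ scale, and fixing $K$ large and then letting $n\to\infty$ gives the claim.
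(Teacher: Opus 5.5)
Your skeleton matches the paper's. The orbit $\{2^j\}$ gives $n$ equal coefficients, $\delta_{w_n}\asymp n^{-1/2}$, and the biased-digit product has first factor $1-2p_n$ (your $-2\varepsilon$) with the remaining factors bounded below. The $\asymp$ statement then follows from \eqref{eq:energy-identity} and \theoremref{energy}.

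\textbf{The gap.} It lies exactly at the step you flag as the main obstacle. With $K$ fixed, discarding the low digits costs $O(2^{-K})$, a constant, against a main term of size $n^{-1/2}$. Your repair rests on a false claim. Write $x=T2^{n-K}+y$, where $T$ is the integer formed by the top $K$ digits and $0\le y<2^{n-K}$. Then $0\le y/N\le 2^{-K}$, so every value $\mye_N(-y)$ lies on the arc of length $2\pi 2^{-K}$ of the unit circle ending at $1$. For $K\ge 3$, the average of $\mye_N(-y)$ over any slice of the low block therefore has modulus at least $\cos(2\pi 2^{-K})$. There is no small leading factor in the low block: the factor $1-2p$ comes only from the digit of weight $1/2$ in $x/N$, whereas the top digit of $y$ has weight about $2^{-K-1}$. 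The smallness you describe would hold for the average of $\exp(-2\pi\imath y/(2^{n-K}-1))$, i.e.\ the low block viewed modulo $2^{n-K}-1$, but not for $\mye_N(-y)$. Even if the low-block average were small, that would work against a lower bound, since it enters multiplicatively. As written, ``fix $K$ large, then let $n\to\infty$'' leaves an error $O(2^{-K})$ that is never $o(n^{-1/2})$.

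\textbf{How to close it.} There are two options.

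The paper lets the number of retained digits grow, $u=\lceil\log_2 n\rceil$. Truncation then costs at most $2\pi 2^{-u}\le 2\pi/n$. Replacing sampling without replacement by independent $\mathrm{Bernoulli}(w_n/n)$ digits costs at most twice the collision probability, $2\binom u2/n=O(n^{-1}\log^2 n)$. Both are $o(n^{-1/2})$, so no mixture decomposition or first-order expansion of hypergeometric weights is needed. The product of the digit factors has modulus at least $(2/\pi)\lvert 1-2p_n\rvert$, which finishes.

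Alternatively, you can keep $K$ fixed if you replace the false smallness claim by a decoupling estimate. Let $\phi_m$ be the average of $\mye_N(-y)$ over the weight-$m$ slice of the low $n-K$ digits. Switching on a uniformly random zero digit couples weight $m$ with weight $m+1$, and the switched position is uniform over all $n-K$ positions. Hence $\lvert\phi_{m+1}-\phi_m\rvert\le 2\pi 2^{-K}/(n-K)$. Conditioning on $T$ then gives
\[
  \mathbb E\,\mye_N(-x)=\phi_{w}\,\mathbb E\,\mye_N\bigl(-T2^{n-K}\bigr)+O\bigl(K2^{-K}n^{-1}\bigr),
  \qquad \lvert\phi_w\rvert\ge\cos(2\pi 2^{-K}).
\]
The top block is then handled by the binomial comparison with error $O(K^2/n)$. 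This already works for $K=3$, but it is an ingredient your proposal does not supply.

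\textbf{Minor point.} Your argument is only asymptotic. The finitely many $n\ge5$ below the threshold are covered because $S_{w_n}$ is a nonempty proper subset of $R$, so the nonzero fourth moment is positive.
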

%%%
The same argument applies to any sequence of integer weights satisfying
\[
  \frac n2+a\sqrt n \leq w_n \leq \frac n2+b\sqrt n,
\]
where $0<a<b$ are fixed, and yields, uniformly in this range,
\[
  E(S_{w_n})-\frac{\abs{S_{w_n}}^4}{N}
  \asymp_{a,b} N^3n^{-3}.
\]
For simplicity, we present the proof in this draft only for the special case stated in \theoremref{lower}.

The optimality assertion concerns the \emph{uniform error term}: it does not assert a matching lower bound for every weight. 
Likewise, the error bound in \theoremref{energy} is uniform in $w$, but it need not be smaller than the main term for very small slices.

The central window makes the conclusions particularly transparent. 
For fixed $C>0$ and $\abs{w-n/2}\leq C\sqrt n$, the standard binomial estimates give $\abs{S_w}\asymp_C Nn^{-1/2}$. 
Hence
    \[
        E(S_w)=\frac{\abs{S_w}^4}{N} \cdot \left(1+\mybigO C{n^{-1}}\right).
    \]
If all the relevant weights lie in this window, then
    \[
     \begin{aligned}
         r_{a_1S_{w_1}+a_2S_{w_2}+a_3S_{w_3}}(x)
            &=\frac{\abs{S_{w_1}}\abs{S_{w_2}}\abs{S_{w_3}}}{N} \cdot
                          \left(1+\mybigO C{n^{-1/4}} \right),\\
         \abs{ R\setminus\supp{a_1S_{w_1}+a_2S_{w_2}} }
            &\ll_C N/n.
     \end{aligned}
    \]
Thus three central slices represent every residue for all sufficiently large $n$, with an asymptotic count for each residue, whereas two cover all but an $\mybigO C{n^{-1}}$ proportion of $R$. 
These conclusions have rather different flavors: near-full binary support alone does not imply the pointwise ternary asymptotic.

\subsection{Symmetry, Dissociation, and the Fourth Moment}
\label{sec:fourier-context}
The identity \(2S_w=S_w\) makes multiplicative symmetry a natural first avenue to consider. 
Indeed,
    \[
        \widehat{S_w}(t)=\widehat{S_w}(2t),
    \]
so each Fourier coefficient is repeated along an orbit of $\Gamma=\langle2\rangle=\{2^j \colon 0\leq j<n\}$. 
This places the slices alongside multiplicative subgroups and sets invariant under their action in the study
of exponential sums~\cite{KonyaginShparlinski1999,AlonBourgain2014}. 
The relevant features are the number of symmetries and the additive independence among them.

For a unit frequency $t\in R^\times$, the orbit $t\Gamma$ has $n$ elements.
Parseval therefore gives
    \[
         n \, \abs{\widehat{S_w}(t)}^2\leq\delta_w,
         \qquad
         \abs{\widehat{S_w}(t)}\leq\sqrt{\delta_w/n}.
    \]
Dissociation improves this estimate. 
Recall that a set $\Lambda$ is \emph{dissociated} if a relation $\sum_{\lambda\in\Lambda}\varepsilon_\lambda\lambda=0$, with $\varepsilon_\lambda\in\{-1,0,1\}$, forces every coefficient to vanish \cite[Definition 4.32, pp. 175--176]{TaoVu2006}. 
Although $\Gamma$ itself has the relation $1+2+\cdots+2^{n-1}=0$ in $R$, the set $\Lambda=\{2^j \colon 0\leq j<n-1\}$ is dissociated: a largest power exceeds the sum of all preceding powers, and every signed sum has absolute value less than $N$. 
The same holds for $t\Lambda$ when $t$ is a unit. 
Applying Chang's inequality~\cite[Lemma~3.1 and its proof]{Chang2002} to $t\Lambda$ gives
    \[
         (n-1)\abs{\widehat{S_w}(t)}^2 \ll \delta_w^2\log(1/\delta_w),
         \qquad
         \abs{\widehat{S_w}(t)}
           \ll \delta_w\sqrt{\frac{\log(1/\delta_w)}{n-1}}.
    \]
Thus, dissociation allows Chang's inequality to replace Parseval. 
In the central window, the two pointwise bounds are respectively $\mybigO C{n^{-3/4}}$ and $\mybigO C{n^{-1}\sqrt{\log n}}$. 
If $N$ is prime, a Mersenne prime, they apply to every nonzero frequency. 
For composite $N$, a nonunit frequency can have a shorter orbit, and multiplication by it need not preserve
dissociation. 
Our fourth-moment estimate has no such restriction.

The need for a fourth moment is not merely an artifact of these particular pointwise estimates. 
Put $M_w=\max\limits_{t\neq 0}\abs{\widehat {S_w}(t)}$. 
For three unit dilates of one slice, bounding one Fourier factor by $M_w$ and applying Parseval to the other two gives
    \[
         \abs{ r_{a_1S_w+a_2S_w+a_3S_w}(x)-N^2\delta_w^3 \vphantom{2^{2^2}} }
               \leq N^2\cdot M_w\delta_w.
    \]
For this upper bound to be $\smallO{N^2\delta_w^3}$, one needs $M_w=\smallO{\delta_w^2}$, or $M_w=\smallO{n^{-1}}$ in the central window. 
However, the proof of \theoremref{lower} gives $\abs{\widehat{S_{w_n}}(-1)} \gg n^{-1}$. 
Thus even the best bound on the largest coefficient, combined only with Parseval in this way, cannot yield the ternary asymptotic for this family. 
The fourth moment retains information about the distribution of Fourier mass that this reduction discards.

A related distinction appears for multiplicative subgroups. 
Let $H\leq\FF_p^\times$ have order $h$, where $p$ is prime. 
The classical square-root character-sum estimate~\cite[Lemma~2.7]{AlonBourgain2014}, followed by Fourier inversion and Parseval, gives
    \[
     \begin{aligned}
         r_{a_1H+a_2H+a_3H}(x) &=\frac{h^3}{p}+\bigO{ p^{1/2}h},
               &&a_1,a_2,a_3\in\FF_p^\times,\\
         \abs{\FF_p\setminus\supp{H+H} }&\ll\frac{p^3}{h^3}.
     \end{aligned}
    \]
At the scale $h\asymp p^{3/4}$, the binary estimate leaves only $\bigO h=\smallO p$ exceptional residues, whereas the ternary error bound is of the same order as its main term. 
This comparison concerns the information supplied by the estimates, not comparable density thresholds for subgroups and slices; central Hamming slices are substantially denser.

\subsection{Cyclic Carries and the Proof Overview}
\label{sec:proof-roadmap}

The additive-combinatorial stage deduces \theoremref{uniform-rep} and \theoremref{full-supp} from the fourth-moment estimate.
For the ternary count, we interpolate between the second and fourth Fourier moments and apply H\"older's inequality.
For binary support, we first apply Cauchy--Schwarz to bound the mixed fourth moment. 
We then apply it to the representation function.
Unit dilations only permute the frequencies. 
The main task is therefore to estimate the energy of one undilated slice. 
Our method is structurally different from the analytic number-theoretic approaches
in~\cite{MauduitSarkozy1997,Konyagin2001,MauduitPomeranceSarkozy2005,MauduitRivat2010,MauduitRivatSarkozy2017,Maynard2019,Green2025}: we count collisions through their cyclic carries, rather than estimate individual Fourier coefficients.

\paragraph{From collisions to a transfer matrix.}
At one digit position, binary addition satisfies $a+b+c=o+2c'$, where $a,b$ are input bits, $o$ is the output bit, and $c,c'$ are the incoming and outgoing carry bits. 
For a collision between two additions, we require a common output and record both carries. 
The resulting automaton has four states. 
Its transition matrix $B(\bu)$, with $\bu=(u_1,u_2,u_3,u_4)$, assigns the monomial $u_1^a u_2^b u_3^g u_4^h$ to input bits $a,b,g,h$, so the four variables record the four input weights.

The end-around carry becomes a cyclic boundary condition expressed by a trace, and \propref{trace-upper-bound-any-n} gives, for every $n\geq 3$,
\[
    E(S_w)\leq [u_1^w u_2^w u_3^w u_4^w]\trace\left(B(\bu)^n\right),
\]
with equality when $w\neq n/2$.
Here brackets denote coefficient extraction. 
The only possible double count occurs when both integer sums equal $N=2^n-1$.
Each input pair then consists of complementary strings, so equal weights would require $2w=n$. 
This possible overcount is harmless: the proof uses only the upper bound above, together with
$E(S_w)\geq \abs{S_w}^4/N$.

\paragraph{\textbf{The cancellation behind the energy estimate.}}
At $u=\mathbf1=(1,1,1,1)$, the eigenvalues of $B(\bu)$ are $8,4,4,2$. 
Let $\Lambda(\bu)$ be the analytic eigenvalue issuing from $8$, and compare it with
    \[
         \Lambda_0(\bu)=\frac12\prod_{j=1}^4(1+u_j),
         \qquad
         [u_1^w u_2^w u_3^w u_4^w]\Lambda_0(\bu)^n
               =2^{-n}\binom nw^4.
    \]
The key fact is that, near $\mathbf1$, the two functions agree whenever any one variable is $1$. Consequently,
    \[
     \Lambda(\bu)-\Lambda_0(\bu)=\prod_{j=1}^4(1-u_j)\cdot H(\bu),
    \]
where $H$ is analytic near $\mathbf1$. 
On the coefficient-integration torus $u_j=\exp({\imath\theta_j})$, this is a fourth-order cancellation. 
Together with the Gaussian decay of the two leading terms, it bounds the local contribution by
    \[
     N^3n\int_{[-\pi,\pi]^4} \norm{\btheta}_2^4
                    \exp\left({-cn\norm{\btheta}_2^2}\right) \, \dd\btheta
           \asymp N^3n^{-3}.
    \]
The Gaussian factor localizes the integral to the scale $\norm{\btheta}_2\asymp n^{-1/2}$.
The four-dimensional Gaussian integral contributes a factor $n^{-2}$, giving the central energy scale $N^3n^{-2}$.
The transfer matrix is iterated once for each of the $n$ binary digits, so the relevant comparison is between $\Lambda^n$ and $\Lambda_0^n$.
Thus the fourth-order cancellation supplies the factor $n\norm{\btheta}_2^4$, which is of order $n^{-1}$ on the Gaussian scale.
This gives the error bound $\bigO{N^3n^{-3}}$.

Away from the origin, the Frobenius--Wielandt comparison theorem \cite[Theorem 2.14, p. 31]{berman1994nonnegative}, together with an analysis of its equality case for $B(\bu)$, gives a uniform spectral gap. 
This part of the integral is exponentially smaller. 
Replacing $2^{-n}$ by $N^{-1}$ in the baseline is absorbed by the stated error. 

Finally, the coefficient formula involves $w$ only through the phase factor $\exp(-\imath w(\theta_1+\cdots+\theta_4))$. 
Since this factor has absolute value one, the triangle inequality bounds the coefficient by an integral that no longer depends on $w$. 
This gives the uniformity in $w$ asserted in \theoremref{energy}.

\paragraph{\textbf{Why the error is attained.}}
For the lower bound, the same cyclic symmetry has the opposite role: it repeats a large coefficient rather than constraining one. 
The $n$ frequencies $-2^j$, $0\leq j<n$, are distinct and have the same coefficient.
For $w_n=\floor{ n/2+\sqrt n }$, we show that $\abs{\widehat {S_{w_n}}(-1)} \gg n^{-1}$, so this single orbit contributes $\gg n^{-3}$ to the fourth moment. 
To estimate the coefficient, we keep only the first $\bigO{\log n}$ most significant digits of a uniform element of $S_{w_n}$ and approximate them by independent bits with bias $w_n/n$.
The resulting Fourier product has magnitude $\gg n^{-1/2}$; the approximation error is smaller. 
Multiplication by the slice density, also of order $n^{-1/2}$, gives the required coefficient.

\paragraph{\textbf{Relation to other methods.}}
The ambient addition law is essential. 
In $\FF_2^n$, Hamming spheres are governed by the Hamming association scheme and Krawtchouk polynomials \cite{Delsarte1973,MacWilliamsSloane1977,Polyanskiy2019}. 
Whereas harmonic analysis on a fixed slice is naturally organized by the Johnson association scheme~\cite{Filmus2016}.  
Related fourth-moment questions are studied in~\cite{Aaronson2018,KirshnerSamorodnitsky2020}. 
Under coordinatewise addition in $\ZZ^n$, energy inequalities for subsets of discrete cubes are developed in~\cite{KaneTao2017,DiosPontGreenfeldIvanisviliMadrid2023,Kovac2023,Shao2026}. 
Neither setting includes cyclic carries.

Automata and transducers are standard tools for digit-defined objects \cite{AlloucheShallit2003,HeubergerKropfProdinger2015}, and carry matrices have a classical combinatorial and probabilistic theory~\cite{Holte1997,DiaconisFulman2009}. 
Cusick also obtains a cyclic carry transfer identity in a related digit problem
\cite[Theorem~6.2, equation~(35)]{Cusick2026}. 
Here, the decisive features are the four-input collision matrix and the fourth-order agreement of its leading eigenvalue with the independent-digit baseline. 
The optimal excess-energy scale comes from this cancellation, not from the transfer-matrix description alone.

\paragraph{\textbf{Organization.}}
\sectionref{prelims} records the Fourier conventions. 
\sectionref{uniform-rep} and \sectionref{full-supp} deduce \theoremref{uniform-rep} and \theoremref{full-supp} from \theoremref{energy}. 
\sectionref{gen} constructs the carry automaton, and \sectionref{energy} proves the energy estimate using the local cancellation and the spectral gap. 
\sectionref{lower} proves \theoremref{lower}.

%%% AI disclosure
\subsection{AI Disclosure: Use of AI Tools}
\label{sec:ai}

We used \textsc{Google AI Ultra} (Gemini) and \textsc{ChatGPT Pro} in a limited, auxiliary role.
%%%
\begin{enumerate} 
    \item Gemini suggested the carry-automaton viewpoint for \theoremref{energy}.
    \item ChatGPT had suggested using Frobenius--Wielandt comparison theorem \cite[Theorem 2.14, p. 31]{berman1994nonnegative} to bound the integral away from the origin. 
    \item ChatGPT was used for English-language editing, including improvements to
        grammar, clarity, organization, and presentation.
\end{enumerate}
%%%
All AI-generated suggestions were independently examined, revised, and incorporated at the authors' discretion.
The authors verified the mathematical arguments and take full responsibility for the correctness and final presentation of the paper.

\section{Notations and Conventions}
\label{sec:prelims}

Throughout the paper,
\[
    N=2^n-1,
    \qquad
    R=\ZZ/N\ZZ,
    \qquad
    W=\{0,1,\ldots,n-1\},
\]
where \(n\geq 3\). 
All sums and congruences involving elements of \(R\) are understood modulo \(N\). 
When a frequency \(t\in R\) occurs in a sum, the notation \(t\neq 0\) means \(t\in R\setminus\{0\}\).

For \(x\in R\), put
\[
    \mye_N(x)\defeq\exp\left(\frac{2\pi \imath x}{N}\right).
\]
For a function \(f\colon R\to\mathbb C\), we use the normalized Fourier transform
\[
    \widehat f(t)
    \defeq
    \frac1N\sum_{x\in R}f(x)e_N(-xt).
\]
Thus, Fourier inversion and Parseval's identity take the forms
\begin{align}
    f(x)
    & =
    \sum_{t\in R}\widehat f(t) \mye_N(xt),
    \label{eq:fourier-inversion} \\
    \sum_{t\in R}\widehat f(t)\overline{\widehat g(t)}
    & =
    \frac1N\sum_{x\in R}f(x)\overline{g(x)}.
    \label{eq:parseval}
\end{align}
For a set \(A\subseteq R\), we write $A(x) = 1_A(x)$ to denote the indicator function, and
\[
    \widehat A\defeq\widehat{1_A},
    \qquad
    \delta(A) \defeq\frac{\abs A}N.
\]
In particular,
\[
    \widehat A(0)=\delta(A).
\]
For the Hamming slice \(S_w\), we abbreviate
\[
    \delta_w \defeq \delta(S_w) = \frac{\abs {S_w}}N.
\]

We use the unnormalized convolution
\[
    (f*g)(x) \defeq \sum_{y\in R}f(y)g(x-y).
\]
With the above Fourier normalization,
\[
    \widehat{f*g}(t)
    =
    N\widehat f(t)\widehat g(t).
\]
Consequently, for \(A_1,\ldots,A_k\subseteq R\),
\begin{equation}
    r_{A_1+\cdots+A_k}(x)
    =
    N^{k-1}
    \sum_{t\in R}
        \widehat{A_1}(t)\cdots\widehat{A_k}(t) \cdot \mye_N(xt).
    \label{eq:representation-fourier}
\end{equation}
In particular,
\begin{equation}
    E(A)
    =
    N^3\sum_{t\in R}\abs{\widehat A(t)}^4
    =
    \frac{\abs A^4}{N}
      +N^3\sum_{t\neq0}\abs{\widehat A(t)}^4.
    \label{eq:energy-fourier}
\end{equation}

For \(a\in R^\times\), write
\[
    aA \defeq \{ax \colon x\in A\}.
\]
A change of variables gives
\begin{equation}
    \widehat{\left(aA\right)}(t)=\widehat A(at).
    \label{eq:dilation-fourier}
\end{equation}
Since \(t\mapsto at\) permutes \(R\), every Fourier moment of \(\widehat A\) is invariant under multiplication of \(A\) by a unit.

The notation \(X\ll Y\) means that \( X\leq CY\) for an absolute constant \(C>0\). 
A subscript, as in \(X\ll_C Y\), records the allowed dependence of the implicit constant on $C$. 
All occurrences of \(o(1)\) refer to the limit \(n\to\infty\).
\section{Near-Uniform Representation Counts}
\label{sec:uniform-rep} 

This section proves \theoremref{uniform-rep} using \theoremref{energy}. 
Note that, by \equationref{representation-fourier} and \ref{eq:dilation-fourier},
\[
    r_{a_1\cdot S_{w_1} + a_2\cdot S_{w_2} + a_3\cdot S_{w_3} }(x) 
        = \sum_{0\leq t< N} N^2 \cdot \widehat{S_{w_1}}(a_1t) \widehat{S_{w_2}}(a_2t) \widehat{S_{w_3}}(a_3t) \cdot \mye_N(xt)
\]
We first separate the $t = 0$ term
%%%
\begin{align*}
    r_{a_1\cdot S_{w_1} + a_2\cdot S_{w_2} + a_3\cdot S_{w_3} }(x)
    & = \frac{\abs{S_{w_1}}\abs{S_{w_2}}\abs{S_{w_3}} }N 
        + \sum_{0< t< N} N^2 \cdot \widehat{S_{w_1}}(a_1t) \widehat{S_{w_2}}(a_2t) \widehat{S_{w_3}}(a_3t) \cdot \mye_N(xt)
    \\
    & = N^2 \cdot \delta_{w_1} \delta_{w_2} \delta_{w_3}
        + \sum_{0< t< N} N^2 \cdot \widehat{S_{w_1}}(a_1t) \widehat{S_{w_2}}(a_2t) \widehat{S_{w_3}}(a_3t) \cdot \mye_N(xt)
\end{align*}
%%%
Upon rearranging, taking absolute values, and applying the triangle inequality, we obtain
%%%
\begin{align*}
    & \hspace{-0.2cm} \abs{ r_{a_1\cdot S_{w_1} + a_2\cdot S_{w_2} + a_3\cdot S_{w_3} } (x) - N^2\cdot \delta_{w_1} \delta_{w_2} \delta_{w_3} } \\
    %%%
        & \hspace{-0.15cm} \leq N^2 \prod_{ j\in\{1,2,3\} } \left( \sum_{t\neq 0} \abs{\widehat{S_{w_j}}(a_jt)}^3 \right)^{1/3}
            \tag{by H\"older's inequality}\\
    %%%
        & \hspace{-0.15cm} \leq N^2 \prod_{j\in\{1,2,3\} } \left( \sum_{t\neq 0} \abs{\widehat{S_{w_j}}(a_jt)}^2 \right)^{1/6}
                \left( \sum_{t\neq 0} \abs{\widehat{S_{w_j}}(a_jt)}^4 \right)^{1/6}
                \tag{interpolating norms $\ell_3\leq \ell_2^{1/3}\cdot \ell_4^{2/3}$}\\
    %%%
        & \hspace{-0.15cm} = N^2 \prod_{j\in\{1,2,3\} } \left( \sum_{t\neq 0} \abs{\widehat{S_{w_j}}(t)}^2 \right)^{1/6}
                \left( \sum_{t\neq 0} \abs{\widehat{S_{w_j}}(t)}^4 \right)^{1/6}
                \tag{because $a_1,a_2,a_3\in R^\times$}\\
    %%%
        & \hspace{-0.15cm} \ll N^2 \cdot \left(\delta_{w_1} \delta_{w_2} \delta_{w_3} \right)^{1/6}\cdot n^{-3/2}
            \tag{by Parseval's identity and \theoremref{energy}}
\end{align*}
%%%
From this, \theoremref{uniform-rep} follows.

\section{Near-Full Support}
\label{sec:full-supp}
In this section, we prove \theoremref{full-supp} using \theoremref{energy}. 
Define
\[
    D \defeq \sum_{t\neq 0} \abs{\widehat{S_{w_1}}(a_1t)}^2 \cdot \abs{\widehat{S_{w_2}}(a_2t)}^2. 
\]
which we can upper bound as follows
%%%
\begin{align}
    D &\leq \prod_{j\in\{1,2\}} \left(\sum_{0<t<N} \abs{\widehat{S_{w_j}}(a_jt)}^4\right)^{1/2}
    \nonumber 
    & (\text{by Cauchy-Schwarz}) \\
    %%%
    &= \prod_{j\in\{1,2\}} \left(\sum_{0<t<N} \abs{\widehat{S_{w_j}}(t)}^4\right)^{1/2}
        & (\text{because $a_1,a_2\in R^\times$})
    \nonumber \\
    %%%
    &\ll n^{-3}
        & (\text{by \theoremref{energy}})
    \label{eq:D-upper-bd}
\end{align}
%%%

For brevity, write $r(x) := r_{a_1\cdot S_{w_1} + a_2\cdot S_{w_2} }(x)$ and $S := \supp{a_1\cdot S_{w_1} + a_2\cdot S_{w_2} }$.
Then using \equationref{representation-fourier}, we can rewrite $r(x)$ in Fourier-analytic form:
\[
    r(x) = N \sum_{0\leq t < N} \widehat{S_{w_1}}(a_1t) \widehat{S_{w_2}}(a_2t)\cdot \mye_N(xt) 
\]
Then, by Parseval's identity 
%%%
\begin{align*}
    \sum_{x\in R} r(x)^2 &= N^3 \sum_{0\leq t<N} \abs{\widehat{S_{w_1}}(a_1t)}^2 \cdot \abs{\widehat{S_{w_2}}(a_2t)}^2
    %%%
    = N^3 \cdot \left( \delta_{w_1}^2 \delta_{w_2}^2 + D \right)
\end{align*}
%%%
Applying Cauchy-Schwarz with the expression above gives
%%%
\begin{align}
    \left( \sum_{x\in R} r(x) \right)^2 &\leq \abs{S} \cdot \sum_{x\in R}r(x)^2 
    = \abs{S} \cdot N^3 \cdot \left( \delta_{w_1}^2 \delta_{w_2}^2 + D\right)
    \label{eq:rx-upper-bound-cauchy}
\end{align}
%%%
On the other hand, we have
\begin{align}
    \sum_{x\in R} r(x) = N^2\cdot \delta_{w_1} \delta_{w_2}. 
    \label{eq:rx-sum}
\end{align}
Combining these two (\equationref{rx-upper-bound-cauchy} and \ref{eq:rx-sum}), we get:
%%%
\begin{equation}
    \abs S \geq N\cdot \frac{\delta_{w_1}^2 \delta_{w_2}^2}{\delta_{w_1}^2 \delta_{w_2}^2 + D}.
    \label{eq:lower-bound-cardinality}
\end{equation}
%%%
Therefore, using the upper bound $D \ll n^{-3}$ (\equationref{D-upper-bd}), the cardinality of the complement satisfies
%%%
\begin{align*}
    \abs{R\setminus S} 
    & \leq N\cdot \frac D{\delta_{w_1}^2 \delta_{w_2}^2 + D}
    \nonumber 
    \tag{by \equationref{lower-bound-cardinality}}
    \\
    %%%
        & \leq N\cdot \min\left\{ 1, \frac D{\delta_{w_1}^2 \delta_{w_2}^2}\right\} 
            \tag{because $\frac v{u+v}\leq \min\left\{1 , \frac vu \right\} $}\\
    %%%
        & \ll N\cdot\min\left\{ 1 , \frac{n^{-3}}{\delta_{w_1}^2 \delta_{w_2}^2}\right\}. 
            \tag{by \equationref{D-upper-bd}}
\end{align*}
%%%
This completes the proof of \theoremref{full-supp}.

\section{Generating Function for Carry Automata}
\label{sec:gen}

Let us construct the generating function for the finite automata for carry addition (refer to \figureref{adder-automaton} for the presentation below). 
Let $a,b,c,c',o\in \bin$ and $u_1$ and $u_2$ be indeterminates. 
At the current position, the incoming carry $c$ and the input bits $a$ and $b$ produce the output bit $o$ and the outgoing carry $c'$, so that over $\ZZ$, 
$$
a+b+c = o + 2c'.
$$

\begin{figure}[H]
    \centering
    \begin{tikzpicture}[scale = 1]
    \node (c) at (0,1) {$c$};
    \node (c') at (0,-1) {$c'$};
    \node [draw] (ab) at (1.5,0) {$a+b$};

    \draw[-latex] (c) [right] to[bend right] (ab) [left];
    \draw[-latex] (ab) [left] to[bend right] (c') [right];

    \node (0) at (3, 0) {$o$};
    \draw[-latex] (ab) [right] to (0)[left];
    \end{tikzpicture}
    \caption{Bitwise addition as an automaton. Incoming carry $c$ and input bits $a$ and $b$ produce output bit $o$ and outgoing carry $c'$, subject to $a+b+c=o+2c'$.}
    \label{fig:adder-automaton}
\end{figure}

Towards constructing the generating function, we construct a $2\times 2$ matrix $M\p o$ as follows:
%%%
\[
    M\p o (u_1,u_2)[c,c'] \defeq \sum_{a,b\colon a+b+c=o+2c'} u_1^a u_2^b.
\]
So, for example, we have:
\begin{align*}
    M\p0(u_1,u_2) = \left(\begin{matrix}
        1 & u_1u_2 \\
        0 & u_1 + u_2
    \end{matrix}\right), \qquad\text{and}\qquad
    M\p1(u_1,u_2) = \left(\begin{matrix}
        u_1 + u_2 & 0 \\
        1 & u_1u_2
    \end{matrix}\right).
\end{align*}
%%%

%%%
\begin{figure}[H]
    \centering
    \begin{tikzpicture}
    \begin{scope}[shift={(-6,0)}]
        \node (c) at (0,1) {$c$};
        \node (c') at (0,-1) {$c'$};
        \node [draw] (ab) at (1.5,0) {$a+b$};
    
        \draw[-latex] (c) [right] to[bend right] (ab) [left];
        \draw[-latex] (ab) [left] to[bend right] (c') [right];
    
        \node (0) at (3, 0) {$o$};
        \draw[-latex] (ab) [right] to (0)[left];
    \end{scope}
    
    \begin{scope}[scale = -1]
        \node (d) at (0,1) {$d$};
        \node (d') at (0,-1) {$d'$};
        \node [draw] (gh) at (1.5,0) {$g+h$};
    
        \draw[-latex] (d) [right] to[bend right] (gh) [left];
        \draw[-latex] (gh) [left] to[bend right] (d') [right];
    
        \node (0) at (3, 0) {$o$};
        \draw[-latex] (gh) [right] to (0)[left];
    \end{scope}
    \end{tikzpicture}
    \caption{Composition of two automata: the collision-in-addition automaton. The additions $a+b$ with incoming carry $c$ and $g+h$ with incoming carry $d$ produce the same output bit $o$, with outgoing carries $c'$ and $d'$ respectively, subject to $a+b+c = o + 2c'$ and $g+h+d = o + 2d'$.}
    \label{fig:collision-automaton}
\end{figure}

Next, our objective is to construct the generating function of the following ``collision-in-addition'' automata (see \figureref{collision-automaton}): 
Suppose we have two incoming carry bits $c,d\in\bin$. 
Suppose there are two pairs of bits $(a,b),(g,h)\in\bin^2$. 
The addition of $a,b,c$ produces output $o$ and outgoing carry bit $c'$. 
The addition of $ g,h,d$ produces the {\em same} output $o$ and outgoing carry bit $d'$. 
We want to construct a $4\times 4$ generating function matrix $B$ in the indeterminates $\bu=(u_1, u_2, u_3, u_4)$ with the following semantics:
\[
    B(\bu)[cd, c'd'] \defeq \sum_{
            \begin{smallmatrix}
                a,b,g,h,o\in\bin\\
                a+b+c = o + 2c'\\
                g+h+d = o + 2d'
            \end{smallmatrix}
    } u_1^au_2^bu_3^gu_4^h.
\]
%%%
Here $cd\in\{00, 01, 10, 11\}$ and $c'd'\in\{00, 01, 10, 11\}$.

\begin{proposition}
\label{prop:matrix-B-def}
    \[
        B(\bu) = M\p0(u_1,u_2) \otimes M\p0(u_3,u_4) + M\p1(u_1,u_2) \otimes M\p1(u_3,u_4). 
    \]
\end{proposition}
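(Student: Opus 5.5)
The plan is to compare the two sides entrywise, using the definition of the Kronecker product together with the convention that the row index $cd$ and the column index $c'd'$ of a $4\times4$ matrix correspond to pairs of indices of the $2\times2$ factors. For any $2\times 2$ matrices $X,Y$ indexed by $\{0,1\}$, the Kronecker product satisfies $(X\otimes Y)[cd,c'd']=X[c,c']\,Y[d,d']$, where $cd$ ranges over $\{00,01,10,11\}$ in lexicographic order, the same ordering used for $B(\bu)$. So it suffices to show, for each fixed $c,d,c',d'\in\bin$,
\[
B(\bu)[cd,c'd'] = \sum_{o\in\bin} M\p o(u_1,u_2)[c,c']\cdot M\p o(u_3,u_4)[d,d'].
\]

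The key step is to split the defining sum for $B(\bu)[cd,c'd']$ by the value of the common output bit $o$. For fixed $o$, the constraint set $\{(a,b,g,h)\colon a+b+c=o+2c',\ g+h+d=o+2d'\}$ is a Cartesian product of $\{(a,b)\colon a+b+c=o+2c'\}$ and $\{(g,h)\colon g+h+d=o+2d'\}$. The monomial $u_1^au_2^bu_3^gu_4^h$ factors as $(u_1^au_2^b)(u_3^gu_4^h)$, so the sum over this product set factors as $M\p o(u_1,u_2)[c,c']\cdot M\p o(u_3,u_4)[d,d']$, directly by the definition of $M\p o$. Summing over $o\in\{0,1\}$ gives the displayed identity, and hence the proposition.

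I expect no step to be a genuine obstacle. The only thing to check carefully is the indexing convention: the ordering of $\{00,01,10,11\}$ for $B$ must match the block structure of $\otimes$, with the first tensor factor indexing the carry $c$ of the $(a,b)$ adder and the second indexing $d$. I would state this convention explicitly. As a sanity check, I would verify one entry, e.g.\ $B[00,00]=1+u_1u_2u_3u_4$, coming from $o=0$ with all inputs zero or all inputs one.
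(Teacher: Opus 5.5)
Your argument is correct and matches the paper, which gives no separate proof: it treats the identity as immediate from the definitions and simply writes out the two Kronecker products. Splitting the defining sum by the common output bit $o$, and factoring both the constraint set and the monomial, is exactly what justifies that display.

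One correction to your proposed sanity check: $B(\bu)[00,00]=1+(u_1+u_2)(u_3+u_4)$, not $1+u_1u_2u_3u_4$. With $c=d=0$ and all four inputs equal to $1$, one gets $a+b+c=2$, which forces $o=0$ and $c'=d'=1$. So $u_1u_2u_3u_4$ sits in $B(\bu)[00,11]$. The term $(u_1+u_2)(u_3+u_4)$ comes from $o=1$ with exactly one input bit set in each pair. This agrees with the entry $pr+1$ in \equationref{B-def}.
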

%%%
Specifically,
\begin{align*}
    B(\bu) &= \left(\begin{matrix}
        1 & u_3u_4 & u_1u_2 & u_1u_2\cdot u_3u_4 \\
        0 & u_3 + u_4 & 0 & u_1u_2\cdot (u_3+u_4)\\
        0 & 0 & (u_1+u_2) & (u_1+u_2)\cdot u_3u_4 \\
        0 & 0 & 0 & (u_1+u_2)\cdot (u_3+u_4)
    \end{matrix}\right)
    \\
    &\qquad + \left(\begin{matrix}
        (u_1+u_2)\cdot(u_3+u_4) & 0 & 0 & 0 \\
        (u_1+u_2) & (u_1+u_2)\cdot u_3u_4 & 0 & 0\\
        (u_3+u_4) & 0 & u_1u_2\cdot(u_3+u_4) & 0 \\
        1 & u_3u_4 & u_1u_2 & u_1u_2\cdot u_3u_4
    \end{matrix}\right).
\end{align*}
%%%

Note that $B(\bu)^n$ denotes the generating function of the additions of $n$-bit strings with the semantics that 
$$
B(\bu)^n[cd,c'd'] = \sum\limits_{\bw\in \{0,1,\dots,n\}^4} \nu(\bw)\cdot u_1^{\bw_1}u_2^{\bw_2}u_3^{\bw_3}u_4^{\bw_4}
$$ 
where $\nu(\bw)$ denotes the {\em number} of $n$-bit strings $\ba,\mathbf b,\mathbf g,\mathbf h\in \bin^n$ such that 
%%%
\begin{enumerate}
    \item Their respective Hamming weights are $\bw_1$, $\bw_2$, $\bw_3$, and $\bw_4$.
    \item Starting with carry bit $c$, the addition of $\ba$ and $\mathbf b$ produces some output $\mathbf o\in\bin^n$ and outgoing carry bit $c'$. 
    \item Starting with carry bit $d$, the addition of $\mathbf g$ and $\mathbf h$ produces the {\em same} output $\mathbf o$ and outgoing carry bit $d'$.
\end{enumerate} 
%%%

Fix arbitrary weights $\bw\in W^4$. 
For brevity, $\bu^\bw$ will denote $u_1^{\bw_1}u_2^{\bw_2}u_3^{\bw_3}u_4^{\bw_4}$. 
Our objective is to count the cardinality of the following set:
\[
    \left\{ \vphantom{2^{2^2}}\; (\ba,\mathbf b,\mathbf g,\mathbf h)\in S_{\bw_1}\times S_{\bw_2}\times S_{\bw_3}\times S_{\bw_4}  \;\colon\; \ba + \mathbf b = \mathbf g + \mathbf h\pmod N \; \right\}.
\]
%%%
Since $N=2^n-1$, we consider runs whose starting and outgoing carries
are identical, i.e., $cd=c'd'$. 
So, we are interested in the quantity
\[
    [\bu^\bw] \left( \trace(B(\bu)^n) \right).
\]
However, $[\bu^\bw] \left( \trace(B(\bu)^n) \right)$ {\em may not be identical to the cardinality of the set above}; there is a subtle disconnect. 

Note that in $\ZZ/(2^n-1)\ZZ$ there are two representations of $0$, one is $\underbrace{00\dotsi 0}_{n\text{-times}}$ and another is $\underbrace{11\dotsi 1}_{n\text{-times}}$. Every modular collision is counted once by the trace coefficient, except when $\ba + \mathbf{b} = \mathbf{g} + \mathbf{h} = N$ over $\ZZ$. Such a collision is counted twice, with common outputs $11\cdots 1$ and $00\cdots 0$ corresponding to the incoming carry pairs $(0,0)$ and $(1,1)$, respectively.

%%%
Keeping our downstream applications in mind, consider the special case $\bw = (w,w,w,w)$ for some $w\in W$, and write $\bu^w$ for  $\bu^{(w,w,w,w)}$. If $\ba, \mathbf{b} \in S_w$ satisfy $\ba + \mathbf{b} = N$, then they are bitwise complements, so $2w=n$. Thus the exceptional case cannot occur when $n$ is odd. This gives the following proposition; the full counting argument is provided in \sectionref{trace-upper-bound-for-any-n}.

\begin{proposition}
\label{prop:trace-upper-bound-any-n}
For any $n$, the {\em energy} of the set $S_w$, denoted by $E(S_w)$, satisfies  $E(S_w)\leq [\bu^{\mathbf{w}}]\left(\trace\left( B(\bu)^n\right)\right)$. The equality holds for odd $n$.
\end{proposition}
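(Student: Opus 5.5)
We argue by an explicit correspondence between quadruples in the energy count and closed walks in the collision automaton. Fix $w$, and let $\mathcal Q$ be the set of quadruples $(a,b,g,h)\in S_w^4$ with $a+b=g+h$ in $R$. Identify each residue with its canonical $n$-bit string in $\{0,\ldots,2^n-2\}$. By the semantics of $B(\bu)^n$ stated after \propref{matrix-B-def}, the coefficient $[\bu^{\mathbf w}]\trace(B(\bu)^n)$ counts tuples $(\ba,\mathbf b,\mathbf g,\mathbf h,c,d)$ with the following properties: the four strings have weight $w$, and there is a common output $\mathbf o\in\bin^n$ such that adding $\ba,\mathbf b$ with incoming carry $c$ at the least significant position ends with outgoing carry $c$. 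Similarly, adding $\mathbf g,\mathbf h$ with incoming carry $d$ ends with outgoing carry $d$. Call such tuples \emph{cyclic runs}. The plan has three steps.

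\textbf{Step 1: each cyclic run lies over a collision.} Summing the digit identities $a_i+b_i+c_i=o_i+2c_{i+1}$ with weights $2^i$, and using $c_0=c_n=c$, gives over $\ZZ$
\[
 \ba+\mathbf b+c=\mathbf o+2^n c,\qquad\text{so}\qquad \ba+\mathbf b=\mathbf o+cN .
\]
Likewise $\mathbf g+\mathbf h=\mathbf o+dN$. Hence $\ba+\mathbf b\equiv \mathbf o\equiv \mathbf g+\mathbf h\pmod N$, and the underlying quadruple lies in $\mathcal Q$.

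\textbf{Step 2: each collision has at least one run.} Take $(\ba,\mathbf b,\mathbf g,\mathbf h)\in\mathcal Q$. The integer $s=\ba+\mathbf b$ lies in $[0,2N-2]$. Choose $c\in\bin$ with $s-cN\in[0,2^n-1]$; put $\mathbf o=s-cN$, and choose $d$ analogously for $\mathbf g+\mathbf h$. For a fixed incoming carry, the carry chain is deterministic. By the identity in Step 1, the outgoing carry $c_n$ satisfies $s+c=\mathbf o'+2^n c_n$ with $\mathbf o'\in[0,2^n-1]$. Since $s+c=\mathbf o+2^nc$, uniqueness of this decomposition forces $c_n=c$ and $\mathbf o'=\mathbf o$.

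The $n$-bit outputs of the two additions are then integers in $[0,2^n-1]$ congruent mod $N$. They are equal unless one is $0$ and the other is $N$. This case is avoided by choosing $c$ and $d$ compatibly, as follows:
\begin{itemize}
\item if $s\notin\{0,N\}$, the choice of $c$ is unique and $\mathbf o\in[1,N-1]$ is the canonical residue;
\item if $s\in\{0,N\}$, the value $s$ itself fixes $c$, and the admissible $c$ are exactly those making $\mathbf o\in\{0,N\}$ consistent;
\item $s=0$ forces $\ba=\mathbf b=0$, so $w=0$ and then $N$ is unreachable.
\end{itemize}
Taking matching choices for both pairs shows that every quadruple in $\mathcal Q$ lifts to at least one cyclic run.

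\textbf{Step 3: runs are overcounted only when $2w=n$.} The fibre of the map from runs to $\mathcal Q$ has size $\geq1$, and it is larger than one only when both pairs admit two choices $(c,\mathbf o)$. Two choices for $s$ means $s=N$ exactly, with $(c,\mathbf o)\in\{(0,N),(1,0)\}$. Indeed, $s+c=\mathbf o+2^nc$ then has the two solutions $\mathbf o=N,c=0$ and $\mathbf o=0,c=1$. Checking $\mathbf o\in[0,2^n-1]$ confirms that no other $s$ has two solutions.

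So the trace coefficient is at least $|\mathcal Q|=E(S_w)$, which proves the inequality. Equality fails only if some $\ba,\mathbf b\in S_w$ satisfy $\ba+\mathbf b=N=11\cdots1$. With no carries possible, this forces $\mathbf b$ to be the bitwise complement of $\ba$, so $w+w=n$, which is impossible for odd $n$.

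The main obstacle is Step 2: making sure that the two additions, each of which may close its cyclic carry in different ways, can always be synchronized to the same $n$-bit output string. The bookkeeping of the boundary values $\mathbf o\in\{0,N\}$ is where I would be most careful, checking $w=0$ and $s=N$ separately.
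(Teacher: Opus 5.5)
Your proposal is correct and follows essentially the same route as the paper: the aggregate carry identity $\ba+\mathbf b+c_0=\mathbf o+2^n c_n$, the closure condition $0\leq s-cN\leq N$, and a case split showing that every collision has exactly one admissible carry pair, except when $\ba+\mathbf b=\mathbf g+\mathbf h=N$ over $\mathbb Z$; that case has two pairs, $(0,0)$ and $(1,1)$, and forces complementary strings, hence $2w=n$. One small wording slip: in your second bullet the phrase that $s$ ``fixes $c$'' is false for $s=N$, where both carries close the cycle, but your Step~3 handles that case correctly, so nothing is lost.
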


Recall that our objective is to prove that 
\[
    E(S_w) = \frac1N\cdot \binom nw^4 + \bigO{N^3\cdot n^{-3}}.
\]
Also, note that 
\begin{equation}\label{eq:baseline}
    [\bu^w] \bigg(\!
    \underbrace{\frac12 \cdot (1+\bu_1)(1+\bu_2)(1+\bu_3)(1+\bu_4)}_{\eqdef \Lambda_0(\bu)}
    \!\bigg)^n = \frac1{2^n} \cdot\binom nw^4. 
\end{equation}
%%%
Interpret $\Lambda_0(\bu)$ as the ``ideal baseline polynomial''.
%%%
Let
%%%
$
L_N \defeq \frac1N\binom{n}{w}^4$  
and $L_{2^n}:=2^{-n}\binom{n}{w}^4.$
%%%
From the Fourier-analytic expansion of energy (\equationref{energy-fourier}), we know, 
%%%
\begin{align*}
E(S_w)= N^3\cdot \sum_{t} \left|\widehat{S}_w(t)\right|^4 &= \frac1N\binom{n}{w}^4 + N^3\cdot \sum_{t \neq 0} \left|\widehat{S}_w(t)\right|^4 \geq L_N > L_{2^n}.
\end{align*}
%%%
Therefore, by \propref{trace-upper-bound-any-n}, we obtain
\begin{align*}
& 0\leq E(S_w) - L_N \leq E(S_w) - L_{2^n} \leq [\bu^w] \left( \trace(B(\bu)^n) \right) - L_{2^n}
\end{align*}
Hence, it suffices to prove
$$
    \left| [\bu^w](\trace(B(\bu)^n)) - L_{2^n} \right| \ll N^3 \cdot n^{-3},
$$
which is the content of \sectionref{energy}.

\medskip
In other words, for any $w\in W$, our goal is now to prove:
\[
     \abs{\vphantom{2^{2^2}}\;  [\bu^w] \left( \trace( B(\bu)^n) - \Lambda_0(\bu)^n \right) \; } \ll N^3\cdot n^{-3}.
\]
%%%
We will proceed using Cauchy's coefficient estimation technique by integrating along the torus $\bu=\left(\exp(\imath\theta_1),\exp(\imath\theta_2),\exp(\imath\theta_3), \exp(\imath\theta_4)\right)$, where $\btheta = (\theta_1,\theta_2,\theta_3,\theta_4)\in [-\pi,\pi]^4$. 
Of specific interest would be the behavior of our polynomial in a small neighborhood around $\btheta=\mathbf 0 = (0,0,0,0)$. 
Keeping this end objective in mind, for brevity, we introduce a slight abuse of notation: $\trace(B(\btheta)^n) - \Lambda_0(\btheta)^n$ will denote the polynomial with the substitution $\bu_i=\exp(\imath\btheta_i)$, for every $i\in\{1,2,3,4\}$.  
\subsection{Trace as an Upper Bound of Energy}
\label{sec:trace-upper-bound-for-any-n}

This section proves \propref{trace-upper-bound-any-n}. 

Write $\bu^w$ for $\bu^{(w,w,w,w)}$. 
Recall that $B(\bu)^n[cd,c'd']$ enumerates the tuples $(\ba,\mathbf{b},\mathbf{g},\mathbf{h},\mathbf{o}) \in (\{0,1\}^n)^5$, together with the sequence of carries $c_0,\ldots,c_n$ and $d_0,\ldots,d_n$ in $\{0,1\}$, satisfying 
\begin{equation}
\label{eq:local}
a_i+b_i+c_i=o_i+2c_{i+1}, \quad g_i+h_i+d_i=o_i+2d_{i+1} \qquad (\text{for } 0 \leq i\leq n-1),    
\end{equation}
with $(c_0,d_0)=(c,d)$ and $(c_n,d_n)=(c',d')$, weighted by $u_1^{\wt(\ba)}u_2^{\wt(\mathbf{b})}u_3^{\wt(\mathbf{g})}u_4^{\wt(\mathbf{h})}$. Taking the trace restricts to the runs with $(c_n,d_n)=(c_0,d_0)$, and extracting $[\bu^{w}]$ fixes all four weights to be $w$.

Call a pair $(c,d)\in\{0,1\}^2$ \emph{admissible} for $(\ba,\mathbf{b},\mathbf{g},\mathbf{h})\in S_w^4$ if both additions return to their incoming carries and produce a common output string. 
Since the inputs and incoming carries uniquely determine both additions, each admissible pair determines exactly one run. Writing $P(\ba,\mathbf{b},\mathbf{g},\mathbf{h})$ for the number of admissible pairs, we have
\begin{equation}\label{eq:trace-as-sum}
[\bu^w]\left(\trace\left(B(\bu)^n\right)\right)=\sum_{\ba,\mathbf{b},\mathbf{g},\mathbf{h} \in S_w} P(\ba,\mathbf{b},\mathbf{g},\mathbf{h}).    
\end{equation}

Multiplying the first relation in \equationref{local} by $2^{i}$ and summing over $0\leq i\leq n-1$, the intermediate carry terms cancel, giving 
\begin{equation}
\label{eq:aggregate}
\mathbf{a} + \mathbf{b} + c_0=2^n c_n + \mathbf{o}    
\end{equation}
where the strings are identified with the integers they represent, with position $i$ having weight $2^i$. Similarly, $\mathbf{g} + \mathbf{h} + d_0 = 2^n d_n + \mathbf{o}$.

Fix $\ba,\mathbf{b},\mathbf{g},\mathbf{h} \in S_w$, write 
$$
s_1=\ba + \mathbf{b} \text{ over } \ZZ \quad \text{and} \quad s_2=\mathbf{g} + \mathbf{h} \text{ over } \ZZ
$$ 
Since $w\in W=\{0,\ldots,n-1\}$, we have $0\leq s_1,s_2\leq 2N-2$.

For an addition with sum $s$ and incoming carry $c$, \equationref{aggregate} shows that it returns to $c$ if and only if $0\leq s-Nc \leq N$, with output $s-cN$. In other words,
\begin{equation}
\label{eq:closure}
    \begin{cases}
        c=0 \text{ and } s\leq N, &\text{with output }s, \\
        c=1 \text{ and } s\geq N, &\text{with output }s-N \\
    \end{cases}
\end{equation}
In particular, the common output gives $s_1 - Nc = s_2 - Nd$. 
Hence, $P(\ba,\mathbf{b},\mathbf{g},\mathbf{h})=0$ unless $s_1=s_2\pmod N$.

Now, suppose $s_1=s_2\pmod N$. 
\begin{itemize}[leftmargin=1.7cm]
    \item[\textbf{Case 1.}]
    Suppose $s_1,s_2\neq N$. 
    
    By \equationref{closure}, each addition has a unique carry that returns to itself, and its output lies in $\{0,1,\ldots,N-1\}$. The outputs agree since $s_1=s_2\pmod N$. Hence, $P(\ba,\mathbf{b},\mathbf{g},\mathbf{h})=1$.

    \item[\textbf{Case 2.}]
    Suppose exactly one sum equals $N$, say $s_1=N \neq s_2$. 
    
    The congruence and the bound on $s_2$ force $s_2=0$. This gives $w=0$ and hence $s_1=0$, a contradiction. The other case is symmetric.

    \item [\textbf{Case 3.}]
    Suppose $s_1=s_2= N$. 
    
    By \equationref{closure}, precisely the pairs $(0,0)$ and $(1,1)$ are admissible, with common outputs $N$ and $0$, respectively. Hence, $P(\ba,\mathbf{b},\mathbf{g},\mathbf{h})=2$.
\end{itemize}
Thus every collision is counted once, except when $\ba + \mathbf{b} = \mathbf{g} + \mathbf{h} = N$ over $\ZZ$, in which case it is counted twice. By \equationref{trace-as-sum}, 
$E(S_w)\leq [\bu^{\mathbf{w}}]\left(\trace\left( B(\bu)^n\right)\right)$, with equality if and only if Case~3 does not occur. 
Finally, $\ba+ \mathbf{b} = N$, implies that $\mathbf{b}$ is the bitwise complement of $\ba$, so $2w=\wt(\ba)+\wt(\mathbf{b})=n$. 
This is impossible for odd $n$, proving equality in that case.
\section{Tighter Estimation of Energy}
\label{sec:energy}

In this section, we prove:
%%%
\begin{equation}
\label{eq:target-1}
    \abs{E(S_w) - \binom nw^4\cdot N^{-1}} \ll N^3\cdot n^{-3}, 
\end{equation}
%%%
which is equivalent to \theoremref{energy}'s upper bound
    \[
        \sum_{t\neq 0} \abs{\widehat{S_w}(t)}^4 \ll n^{-3}.
    \]
As alluded in \sectionref{gen} after \propref{trace-upper-bound-any-n}, we make the following innocuous modification:
instead of proving \equationref{target-1} directly,
it would suffice to prove that 
$$
\abs{[\bu^w](\trace(B(\bu)^n)) -\binom nw^4\cdot 2^{-n}} \ll N^3\cdot n^{-3}.
$$
Let us proceed with this estimation now. We set up a generating function
    \[
        G(\bu) := \trace( B(\bu)^n ) - \Lambda_0(\bu)^n, 
    \]
where $\bu=(u_1, u_2, u_3, u_4)$; the matrix $B$ is defined by \propref{matrix-B-def} in \sectionref{gen} (see also \equationref{B-def} for another form) and $\Lambda_0$ is defined by \equationref{baseline}. 
%%%
\begin{claim}
The following inequality holds:
    \[
        0\leq E(S_w) - \binom nw^4\cdot2^{-n} \;\leq\; [u_1^w u_2^w u_3^w u_4^w] \left( G(\bu) \right).
    \]
\end{claim}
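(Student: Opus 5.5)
The claim consists of two inequalities, and both follow from results already in hand; the plan is to chain them. Throughout, $w\in W$ is fixed and $\bu^w$ abbreviates $u_1^wu_2^wu_3^wu_4^w$.

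\textbf{Lower inequality.} By the Fourier expansion \equationref{energy-fourier}, we have
\[
E(S_w)=\frac1N\binom nw^4+N^3\sum_{t\neq0}\abs{\widehat{S_w}(t)}^4\geq \frac1N\binom nw^4 .
\]
Since $N=2^n-1<2^n$, this gives $E(S_w)\geq 2^{-n}\binom nw^4$, which is the left inequality.

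\textbf{Upper inequality.} Coefficient extraction $[\bu^w]$ is linear. Applied to $G(\bu)=\trace(B(\bu)^n)-\Lambda_0(\bu)^n$, it gives
\[
[\bu^w]G(\bu)=[\bu^w]\trace(B(\bu)^n)-[\bu^w]\Lambda_0(\bu)^n .
\]
By \equationref{baseline}, the second term equals $2^{-n}\binom nw^4$. Indeed, $\Lambda_0^n=2^{-n}\prod_{j}(1+u_j)^n$, and each factor contributes the coefficient $\binom nw$ of $u_j^w$. \propref{trace-upper-bound-any-n} then gives $E(S_w)\leq[\bu^w]\trace(B(\bu)^n)$. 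Subtracting $2^{-n}\binom nw^4$ from both sides yields the right inequality.

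There is no genuine obstacle. The only points to check are the direction of the comparison $1/N>1/2^n$ and that the coefficient identity in \equationref{baseline} is exact. Both are immediate. The substantive content, namely that the trace dominates the energy even when $2w=n$ (where collisions with $s_1=s_2=N$ are counted twice), is already supplied by \propref{trace-upper-bound-any-n}.
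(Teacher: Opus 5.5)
Your proof is correct and is essentially the paper's argument. The paper likewise gets the lower bound from \equationref{energy-fourier} together with $1/N>2^{-n}$, and the upper bound from \propref{trace-upper-bound-any-n} combined with the exact coefficient identity \equationref{baseline}.
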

%%%
This follows from \propref{trace-upper-bound-any-n} and the discussion after it in \sectionref{gen}.
%%%
By the Cauchy integral formula (or, in this specific case, the orthogonality relation), we have:
\[
    [u_1^w u_2^w u_3^w u_4^w] \left( G(\bu) \right) = \frac1{(2\pi)^4} \int_{[-\pi,\pi]^4} G(\btheta) \exp\left(-\imath w(\btheta_1+\btheta_2+\btheta_3+\btheta_4\right)) \dd\btheta, 
\]
where 
    $G(\btheta) = G\left( \exp(\imath\btheta_1), \exp(\imath\btheta_2), \exp(\imath\btheta_3), \exp(\imath\btheta_4) \right)$. 
By the triangle inequality, we have
\[
    \abs{E(S_w) - \binom nw^4\cdot 2^{-n}} \ll \int_{[-\pi,\pi]^4} \abs{G(\btheta)} \dd\btheta.
\]
We split the contour into two different parts and estimate this integral separately. 
Let us partition the set $[-\pi,\pi]^4$ into two sets.
%%%
\begin{align}
    \bTheta_\close := \left\{ \btheta\in [-\pi,\pi]^4 \;\colon\; \abs{\btheta_1},\abs{\btheta_2},\abs{\btheta_3},\abs{\btheta_4}\leq \tau \right\}
    \;\text{ and }\;
    \bTheta_\far := [-\pi,\pi]^4\setminus \bTheta_\close
    \label{eq:theta-close-far-def}
\end{align}
for some parameter $\tau$. 
%%%

\begin{claim}\label{clm:int-close}
The following estimation holds:
    \[
        \int_{\bTheta_\close} \abs{G(\btheta)} \dd\btheta \ll N^3\cdot n^{-3}.
    \]
\end{claim}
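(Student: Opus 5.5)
The plan follows the roadmap in \sectionref{proof-roadmap}: perturbation theory for the leading eigenvalue, a fourth-order cancellation, and Gaussian integrals.

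\textbf{Step 1: split the trace.} Choose $\tau>0$ small and absolute. Then for $\btheta\in\bTheta_\close$, $B(\btheta)$ is a small perturbation of $B(\mathbf 1)$, whose eigenvalues are $8,4,4,2$. Analytic perturbation theory gives a simple analytic eigenvalue $\Lambda(\bu)$ near $8$ on a polydisc around $\mathbf 1$. The other three eigenvalues stay in a disc of radius at most $5$. Hence
\[
\trace(B(\btheta)^n)=\Lambda(\btheta)^n+\bigO{5^n},
\qquad
\abs{G(\btheta)}\leq \abs{\Lambda(\btheta)^n-\Lambda_0(\btheta)^n}+\bigO{5^n}.
\]
The error term integrates to $\bigO{5^n}$, which is negligible against $N^3n^{-3}\asymp 8^nn^{-3}$.

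\textbf{Step 2: the factorisation $\Lambda-\Lambda_0=\prod_j(1-u_j)H(\bu)$.} It suffices to show $\Lambda=\Lambda_0$ whenever some $u_j=1$; then $\Lambda-\Lambda_0$ is divisible by each $(1-u_j)$ in the ring of analytic functions near $\mathbf 1$. By the symmetries $u_1\leftrightarrow u_2$, $u_3\leftrightarrow u_4$ and $(u_1,u_2)\leftrightarrow(u_3,u_4)$ of $B$, we may take $u_4=1$. The idea is that when one input bit $h$ is free and unweighted, the second adder's output is uniformly distributed whatever its carry. Concretely, I would find an explicit left (or right) eigenvector of $B(u_1,u_2,u_3,1)$ with eigenvalue $\frac12(1+u_1)(1+u_2)(1+u_3)\cdot 2$. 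Summing over $d'$ for fixed $(c,d)$, the row sums of the $M^{(o)}(u_3,1)$ blocks are independent of $d$ and $o$, both equal to $1+u_3$. So the vector that sums out the second carry coordinate reduces $B$ to $(1+u_3)\,(M^{(0)}+M^{(1)})(u_1,u_2)$. The leading eigenvalue of that $2\times2$ carry matrix is $\frac12(1+u_1)(1+u_2)\cdot(\text{const})$, which one checks directly. Matching the normalisation at $\mathbf 1$ (both sides equal $8$) and using analyticity identifies this eigenvalue with $\Lambda$ for $\bu$ near $\mathbf 1$. On the torus this gives $\abs{\Lambda(\btheta)-\Lambda_0(\btheta)}\ll \prod_j\abs{\theta_j}\leq \norm{\btheta}_2^4$.

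\textbf{Step 3: Gaussian decay and the integral.} Expand to second order at $\btheta=\mathbf 0$. Since $\abs{\Lambda_0(\btheta)}=8\prod_j\abs{\cos(\theta_j/2)}\leq 8\exp(-c\norm{\btheta}_2^2)$ on $[-\tau,\tau]^4$, and $\Lambda$ agrees with $\Lambda_0$ to fourth order, shrinking $\tau$ gives $\abs{\Lambda(\btheta)}\leq 8\exp(-c\norm{\btheta}_2^2)$ as well. The mean value inequality $\abs{x^n-y^n}\leq n\abs{x-y}\max(\abs x,\abs y)^{n-1}$ then yields
\[
\abs{\Lambda^n-\Lambda_0^n}\ll n\,8^{n}\norm{\btheta}_2^4\exp(-c(n-1)\norm{\btheta}_2^2).
\]
Integrating over $\mathbb R^4$ after substituting $\btheta=\bm\phi/\sqrt n$ gives $n\cdot 8^n\cdot n^{-2}\cdot n^{-2}=8^nn^{-3}\ll N^3n^{-3}$.

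\textbf{Main obstacle.} I expect Step 2 to be the hardest: pinning down the exact eigenvector, and checking that the eigenvalue it produces is the perturbed leading one rather than another branch. Checking continuity at $\mathbf 1$, where the value must be $8$, should settle the branch question. A secondary point is choosing $\tau$ uniformly so that the perturbation bounds and the Gaussian bound for $\Lambda$ hold together. Both are fixed-matrix compactness arguments with absolute constants.
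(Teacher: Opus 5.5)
Your proposal is correct and follows essentially the paper's route. You use the spectral splitting $\trace(B(\btheta)^n)=\Lambda(\btheta)^n+\bigO{5^n}$, then the factorisation $\Lambda-\Lambda_0=\prod_{j}(1-u_j)H(\bu)$, obtained by exhibiting $\Lambda_0$ as an eigenvalue on each hyperplane $u_j=1$ and identifying it with the isolated branch near $8$, and finally Gaussian bounds for $\Lambda_0$ and $\Lambda$ together with the rescaling $\btheta=\bm\phi/\sqrt n$. The only difference is cosmetic: the paper observes directly that all row sums of $B$ equal $\Lambda_0$ when $u_1=1$, whereas your reduction through $v\otimes\mathbf 1$ and the $2\times 2$ matrix $(M^{(0)}+M^{(1)})(u_1,u_2)$ (eigenvalues $(1+u_1)(1+u_2)$ and $u_1+u_2$) lands on the same all-ones eigenvector.
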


\begin{claim}\label{clm:int-far}
For some positive constant $c = c(\tau)$, 
    \[
        \int_{\bTheta_\far} \abs{G(\btheta)} \dd\btheta \ll N^3\cdot \exp(-cn).
    \]
\end{claim}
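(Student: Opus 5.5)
We prove \claimref{int-far} with a uniform spectral gap on the far region, obtained from the Frobenius--Wielandt comparison theorem. The plan is to show $|\trace(B(\btheta)^n)| \leq 4\,\rho(B(\btheta))^n$ up to polynomial factors, $\rho(B(\btheta))\leq 8e^{-c}$ on $\bTheta_\far$, and $|\Lambda_0(\btheta)|\leq 8e^{-c}$ there. Since $N^3 \asymp 8^n/8$, the far-region contribution is then $\ll 8^n e^{-cn}\ll N^3 e^{-cn}$. The volume of $\bTheta_\far$ is at most $(2\pi)^4$.

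\textbf{The baseline term.} This part is elementary. We have $|\Lambda_0(\btheta)| = 8\prod_j |\cos(\theta_j/2)|$. If some $|\theta_j|>\tau$, then $|\cos(\theta_j/2)|\leq \cos(\tau/2)<1$ because $|\theta_j|\leq\pi$. Hence $|\Lambda_0(\btheta)|^n\leq 8^n\cos(\tau/2)^n$.

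\textbf{The trace term.} Entrywise, $|B(\btheta)|\leq B(\mathbf 1)$ by the triangle inequality, since every entry of $B$ is a polynomial with nonnegative coefficients. Next we check that $B(\mathbf 1)$ is irreducible; it is in fact primitive. From the explicit matrix, row $00$ of $B(\mathbf 1)$ is $(1+4,1,1,1)=(5,1,1,1)$, and all other rows have a positive entry to $00$. So $B(\mathbf 1)$ is primitive with Perron root $8$.

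By Wielandt's theorem, $\rho(B(\btheta))\leq 8$. Equality holds iff $B(\btheta)=e^{\imath\phi}DB(\mathbf 1)D^{-1}$ for a diagonal unitary $D$, which in particular forces $|B(\btheta)|=B(\mathbf 1)$ entrywise. We analyze the equality case as follows.
\begin{itemize}
\item The entry $[01,01]$ is $u_3+u_4+(u_1+u_2)u_3u_4$. Having modulus $4$ forces all four monomials to have a common phase.
\item Similarly, the $[00,00]$ entry $1+(u_1+u_2)(u_3+u_4)$ forces $u_1u_3=u_1u_4=u_2u_3=u_2u_4=1$, with the phases aligned with $1$.
\item Combined with the $[01,01]$ alignment $u_3=u_4=u_1u_3u_4$, this gives $u_1=u_2$ and $u_3=u_4=\bar u_1$, and then $u_3=u_1u_3^2$ gives $u_1=\bar u_3 = u_3$. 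Hence $u_1^2=1$.
\item The $[11,11]$ entry $(u_1+u_2)(u_3+u_4)+u_1u_2u_3u_4$ (modulus $5$) and the remaining entries should then exclude $u_1=-1$. If they do not, the candidate $\btheta=(\pi,\pi,\pi,\pi)$ must be checked directly. There, $B$ at $\bu=-\mathbf 1$ has entries such as $[00,01]=u_3u_4=1$ and $[01,01]=-2+2\cdot1=0\neq$ modulus $4$. So equality fails.
\end{itemize}
Thus $\rho(B(\btheta))=8$ only at $\btheta=\mathbf 0$. By continuity of the spectral radius and compactness of $\bTheta_\far$ (closed after including its boundary, which avoids $\mathbf 0$), we get $\rho(B(\btheta))\leq 8e^{-c}$ there.

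\textbf{From spectral radius to trace.} The trace satisfies $|\trace(B^n)|\leq 4\rho(B)^n$ because the trace is the sum of the eigenvalues' $n$-th powers. This needs no norm bounds.

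The main obstacle is the equality-case analysis: one must verify carefully, from the explicit entries, that $|B(\btheta)|=B(\mathbf 1)$ entrywise has only the trivial solution on the torus. I would first try exactly the entrywise-modulus argument above, handling the entries $[00,00]$, $[01,01]$, $[10,10]$ and $[11,11]$ to force all phases trivial. If a stray case like $u_j=-1$ survived, I would rule it out by the diagonal-similarity condition, comparing the phases of products along the cycles $00\to01\to00$.
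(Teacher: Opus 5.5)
Your overall route is the paper's: the $\cos(\tau/2)$ bound for $\Lambda_0$, Frobenius--Wielandt against $A=B(\mathbf 1)$, the equality case, compactness, and $\abs{\trace(B^n)}\leq 4\rho(B)^n$. Those parts are fine. The gap is in the equality-case analysis, which is the crux.

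From the $[00,00]$ and $[01,01]$ entries you correctly obtain $\bu=(v,v,\bar v,\bar v)$. But $u_3=u_1u_3^2$ only restates $u_1u_3=1$, and nothing gives $\bar u_3=u_3$, so ``hence $u_1^2=1$'' does not follow.

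Worse, on the whole circle $\bu=(v,v,\bar v,\bar v)$ you have $p=2v$, $q=v^2$, $r=2\bar v$, $s=\bar v^2$. Every entry of $B(\bu)$ then has exactly the modulus of the corresponding entry of $B(\mathbf 1)$: for example $[00,00]=5$, $[01,01]=4\bar v$, $[00,01]=\bar v^2$, $[01,00]=2v$. So no argument using only $\abs{B(\bu)}=B(\mathbf 1)$ entrywise can finish. Your direct check at $-\mathbf 1$ also has an arithmetic slip: there $u_1+u_2=-2$, so $[01,01]=-2+(-2)\cdot 1=-4$, of modulus $4$. Equality of moduli does hold at that point.

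The missing ingredient is the phase information in $B(\bu)=e^{\imath\phi}DAD^{-1}$. You invoke it only as a fallback for an isolated stray point, but it must be applied to the entire one-parameter family. Since $D$ is diagonal, $B(\bu)_{ii}=e^{\imath\phi}A_{ii}$ for every $i$. Your own $[00,00]$ analysis shows that entry equals exactly $5$, which forces $e^{\imath\phi}=1$. Then $[01,01]=4\bar v=4e^{\imath\phi}$ forces $v=1$.

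The paper does the equivalent with $2$-cycles. From $B_{00,11}B_{11,00}=1=e^{2\imath\phi}A_{00,11}A_{11,00}$ it gets $e^{2\imath\phi}=1$. Then $B_{00,01}B_{01,00}=2\bar v=2e^{2\imath\phi}$ gives $v=1$. With this repair the rest of your argument goes through as written.
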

%%%
Below, \sectionref{int-close} proves \claimref{int-close} and \sectionref{int-far} proves \claimref{int-far}. 
From these two claims, we immediately conclude \equationref{target-1}.

\subsection{Proof of \claimref{int-close}}
\label{sec:int-close}

We show that there is a function $\Lambda(\btheta)$ such that 
%%%
\begin{align*}
%%%
\abs{G(\btheta)} 
& \leq \abs{\Lambda(\btheta)^n - \Lambda_0(\btheta)^n} + \bigO{5^n}
\\
%%%
& \leq n\cdot 
        \abs{\Lambda(\btheta)-\Lambda_0(\btheta)} \cdot 
        \left(\vphantom{2^{2^2}}\abs{\Lambda(\btheta)}^{n-1} 
                + \abs{\Lambda_0(\btheta)}^{n-1}\right) + 
                        \bigO{5^n}.
%%%
\end{align*}
%%%
We first state the following claim (see \equationref{theta-close-far-def} for the definition of $\bTheta_{\close}$).
%%%
\begin{claim}\label{clm:cent-behavior}
The following inequalities hold:
%%%
    \begin{align*}
    %%%
        \abs{\Lambda(\btheta)-\Lambda_0(\btheta)} & \ll \norm{\btheta}_2^4
            \tag{for $\btheta\in\bTheta_\close$}
        \\
        %%%
        \abs{\Lambda_0(\btheta)} &\leq 8\exp\left(- \frac{\norm{\btheta}_2^2}{\pi^2} \right)
        \\
        %%%
        \abs{\Lambda(\btheta)} &\leq 8\exp\left( - c\norm{\btheta}_2^2\right) 
            \tag{for $\btheta\in\bTheta_\close$}.
    %%%
    \end{align*}
%%%
\end{claim}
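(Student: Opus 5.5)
The claim has three parts; the second is elementary, the first carries the real content, and the third follows from the first two.

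\emph{The bound on $\Lambda_0$.} On the torus, $\abs{1+\exp(\imath\theta)}=2\abs{\cos(\theta/2)}$, so
\[
  \abs{\Lambda_0(\btheta)}=8\prod_{j=1}^4\abs{\cos(\theta_j/2)}.
\]
For $\abs x\leq\pi/2$ we have $0\leq\cos x\leq\exp(-x^2/2)$. Applying this with $x=\theta_j/2$ gives $\abs{\Lambda_0(\btheta)}\leq 8\exp(-\norm{\btheta}_2^2/8)$. Since $\pi^2>8$, this is at most $8\exp(-\norm{\btheta}_2^2/\pi^2)$, for every $\btheta\in[-\pi,\pi]^4$.

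\emph{The fourth-order agreement.} At $\bu=\mathbf1$ the eigenvalue $8$ of $B(\bu)$ is simple, separated from $4,4,2$. By analytic perturbation theory for a simple eigenvalue, there is a polydisc $D$ around $\mathbf1$ in $\mathbb C^4$ on which the eigenvalue issuing from $8$ is a holomorphic function $\Lambda(\bu)$, staying separated from the rest of the spectrum. The key identity is $\Lambda=\Lambda_0$ on each hyperplane $\{u_j=1\}$. Take $u_1=1$ first. A direct check from the displayed matrices gives
\[
  M\p o(1,u_2)\,(1,1)^{\top}=(1+u_2)(1,1)^{\top}\quad (o=0,1),\qquad \bigl(M\p0+M\p1\bigr)(u_3,u_4)\,(1,1)^{\top}=(1+u_3)(1+u_4)(1,1)^{\top}.
\]
By \propref{matrix-B-def} this yields
\[
  B(1,u_2,u_3,u_4)\,\mathbf 1=(1+u_2)(1+u_3)(1+u_4)\,\mathbf 1=\Lambda_0(1,u_2,u_3,u_4)\,\mathbf 1 .
\]
Since $\Lambda_0(\mathbf1)=8$ and the spectrum is separated in $D$, this eigenvalue must be $\Lambda$. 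The other three hyperplanes follow by symmetry: swapping $u_1\leftrightarrow u_2$ or $u_3\leftrightarrow u_4$ leaves $B$ invariant, and swapping the two adders corresponds to conjugating by the permutation $cd\mapsto dc$. So $F=\Lambda-\Lambda_0$ is holomorphic on $D$ and vanishes on every $\{u_j=1\}$. Dividing successively (on a polydisc, a holomorphic function vanishing on $\{u_1=1\}$ equals $(1-u_1)$ times a holomorphic function, by one-variable division with parameters, and the quotient still vanishes on the other hyperplanes) gives
\[
  F=\prod_{j=1}^4(1-u_j)\cdot H,
\]
with $H$ bounded on a slightly smaller closed polydisc. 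Choose $\tau$ so that the image of $\bTheta_\close$ under $\theta_j\mapsto\exp(\imath\theta_j)$ lies inside that smaller polydisc. Then
\[
  \abs{\Lambda(\btheta)-\Lambda_0(\btheta)}\ll\prod_j\abs{\theta_j}\leq\norm{\btheta}_2^4/16
\]
by AM--GM.

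\emph{The bound on $\Lambda$.} On $\bTheta_\close$,
\[
  \abs{\Lambda(\btheta)}\leq 8\exp(-\norm{\btheta}_2^2/\pi^2)+C\norm{\btheta}_2^4 .
\]
Write $x=\norm{\btheta}_2^2\leq 4\tau^2$. Then $\exp(-x/\pi^2)\leq 1-x/\pi^2+x^2/(2\pi^4)$, so the right side is at most $8\bigl(1-x/\pi^2+C'x^2\bigr)$. Shrinking $\tau$ further so that $C'x\leq 1/(2\pi^2)$, this is at most $8(1-x/(2\pi^2))\leq 8\exp(-x/(2\pi^2))$, which is the third inequality with $c=1/(2\pi^2)$.

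The main obstacle is the middle step: pinning down that the explicit eigenvector $\mathbf 1$ on each hyperplane gives the \emph{analytic branch} $\Lambda$, which needs the spectral separation to persist uniformly on $D$, and justifying the several-variable division. The latter I would handle by Hadamard's formula $F=(1-u_1)\int_0^1 \partial_1 F\,\dd s$ along segments, applied one coordinate at a time. These choices fix $\tau$ as a small absolute constant.
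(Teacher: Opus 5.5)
Your proposal is correct and follows the paper's proof essentially step for step: you take the analytic branch $\Lambda$ of the simple eigenvalue near $8$, use the eigenvector $\mathbf 1$ of $B$ on each hyperplane $\{u_j=1\}$ to get $\Lambda=\Lambda_0$ there and hence the factor $\prod_j(1-u_j)$, and then apply the triangle inequality against the Gaussian bound for $\Lambda_0$. The differences are only refinements: the sharper inequality $\cos x\le\exp(-x^2/2)$, an explicit constant $c=1/(2\pi^2)$, and explicit justifications of the symmetry (including the $cd\mapsto dc$ conjugation) and of the several-variable division, which the paper covers only by ``by symmetry'' and ``consequently''.
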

\claimref{cent-behavior} gives us
\begin{align*}
    \int_{\bTheta_\close} \abs{G(\btheta)} \dd\btheta 
    %%%
    &\ll N^3\int_{\bTheta_\close}n\cdot \norm{\btheta}_2^4 \cdot \exp \left( -cn\norm{\btheta}_2^2 \right) \dd\btheta +\bigO{5^n} \\
    %%%
    &\ll N^3 \cdot n \cdot n^{-4} + \bigO{5^n} \ll N^3\cdot n^{-3}.
\end{align*}
The proof of \claimref{cent-behavior} is deferred to \sectionref{cent-behavior}.

\subsection{Proof of \claimref{int-far}}
\label{sec:int-far} 

It suffices to show that
\begin{align*}
    & \abs{\trace\left( B(\btheta)^n \right)} \ll N^3\cdot \exp(-cn) \\
    & \abs{\Lambda_0(\btheta)}^n \ll N^3\cdot \exp(-cn)
\end{align*}
These are argued below. 

\subsubsection*{Smallness of $\Lambda_0$.}
Suppose $\abs{\theta_1}\geq \tau$.
Then, 
\[
    \abs{1 + \exp(\imath\theta_1)} \leq 2\cos\left(\frac{\theta_1}{2} \right) \leq 2\cos\left( \frac{\tau}{2}\right) \leq 2\cdot \exp\left(-\frac{\tau^2}{8}\right).
\]
From this, it is immediate that
\[
    \abs{\Lambda_0(\btheta)}^n \ll N^3\cdot \exp\left(-\frac{\tau^2}{8} \cdot n\right).
\]
This argument works for any $\btheta\in\bTheta_\far$.

\subsubsection*{Smallness of $\trace\left( B(\btheta)^n \right)$.}

We will prove the following claim in \sectionref{spec-prop}:
\begin{claim}\label{clm:small-faraway}
    For $\btheta\in\bTheta_\far$, there is a positive constant $c=c(\tau)$ such that the spectral radius is upper-bounded: 
    \[
        \rho(B(\btheta)) \leq 8\cdot\exp(-c).
    \]
\end{claim}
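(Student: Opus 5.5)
The plan is to bound the spectral radius by comparison with the nonnegative matrix $B(\mathbf 1)$, and then to rule out the equality case with a compactness argument. For $\bu$ on the torus, every entry of $B(\bu)$ is a sum of unimodular monomials, so $\abs{B(\bu)}\leq B(\abs{\bu})=B(\mathbf 1)$ entrywise. Here $B(\mathbf 1)$ is the nonnegative matrix with eigenvalues $8,4,4,2$.

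\textbf{Step 1: Perron data of $B(\mathbf 1)$.} I will check that $B(\mathbf 1)$ is irreducible, and in fact primitive. Its first row is $(1+4,\;1,\;1,\;1)=(5,1,1,1)$ and its last row is $(1,1,1,1)+(0,2,2,4)$. Every row sum equals $8$, since each of the $16$ input choices $(a,b,g,h)$ forces a unique $o$ and a unique pair $(c',d')$ only when the parities agree, and summing over both values of $o$ covers all choices. Combined with positivity of enough entries, this gives irreducibility. The Frobenius--Wielandt comparison theorem \cite[Theorem 2.14]{berman1994nonnegative} then gives $\rho(B(\btheta))\leq\rho(B(\mathbf 1))=8$. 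It also says that equality forces $B(\btheta)=e^{\imath\phi}DB(\mathbf 1)D^{-1}$ for some unimodular diagonal $D$. In particular every entry of $B(\btheta)$ must then have modulus equal to the corresponding entry of $B(\mathbf 1)$.

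\textbf{Step 2: the equality case forces $\btheta\equiv\mathbf 0$.} The first diagonal entry is $1+(u_1+u_2)(u_3+u_4)$, whose modulus equals $5$ only if $u_1=u_2$, $u_3=u_4$, and $u_1u_3=1$. The entry $B[01,01]=(u_3+u_4)+(u_1+u_2)u_3u_4$ has modulus $4$ only if, in addition, $u_3=u_1u_3u_4$, that is $u_1u_4=1$. Using the other diagonal entries and the entries $u_1u_2$ and $u_3u_4$ in the same way, together with the constraint that $D$ is a diagonal similarity (so that products of entries along cycles are preserved up to $e^{\imath k\phi}$), I expect to force $u_1=u_2=u_3=u_4=1$. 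The main obstacle is here: I must make sure the cycle conditions, for instance the loop at state $00$ having phase $e^{\imath\phi}$, rule out spurious solutions such as $u_j=-1$. The entry $1$ on the diagonal at $00$ pins $\phi=0$ against the term $(u_1+u_2)(u_3+u_4)$, and I will use that. With $\phi=0$, the diagonal entries must equal their moduli exactly, which forces each $u_j$ to be $1$.

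\textbf{Step 3: compactness.} The function $\btheta\mapsto\rho(B(\btheta))$ is continuous, and $\bTheta_\far$ is compact. By Step 2 it contains no point where $\rho=8$. Hence $\max_{\bTheta_\far}\rho(B(\btheta))<8$, which I write as $8e^{-c}$ with $c=c(\tau)>0$. This proves the claim. Afterwards, since $\abs{\trace(B^n)}\leq 4\rho^n$, we get the required bound $N^3e^{-cn}$.
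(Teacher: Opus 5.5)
Your proposal is correct and follows the paper's route: Frobenius--Wielandt comparison with $B(\mathbf 1)$, exclusion of the equality case, then compactness. Your way of closing the equality case is a slightly shorter variant of the paper's argument via the products of the $(1,4),(4,1)$ and $(1,2),(2,1)$ entries: a diagonal similarity fixes the diagonal, so $B(\bu)_{ii}=e^{\imath\phi}B(\mathbf 1)_{ii}$; the $(00,00)$ entry then forces $e^{\imath\phi}=1$, and the $(01,01)$ entry forces $\bu=\mathbf 1$.

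Two small slips need fixing. The last row of $B(\mathbf 1)$ is $(1,1,1,5)$. Also, $\bTheta_\far$ is not closed, so run the compactness step on its closure $\{\norm{\btheta}_\infty\geq\tau\}$, which still omits $\mathbf 0$, as the paper does.
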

%%%
\claimref{small-faraway} implies that
$$
\trace\left(B(\btheta)^n\right) = \mu_1^n + \mu_2^n + \mu_3^n + \mu_4^n,
$$
where, for all eigenvalues, one has $\abs{\mu_1}, \abs{\mu_2}, \abs{\mu_3}, \abs{\mu_4} \leq 8\exp(-c)$.
We hence conclude 
$$
\abs{\trace\left( B(\btheta)^n \right)} \ll N^3\cdot \exp(-cn)
$$

\subsection{Proof of \claimref{cent-behavior}}
\label{sec:cent-behavior}

% \subsubsection*{Part 1.}
From \equationref{close-behav} in \sectionref{spec-prop}, we have
\[
    \abs{\Lambda(\btheta) - \Lambda_0(\btheta) }
    %%%
    \ll \prod_{j=1}^4\abs{\left(1-\exp(\imath\btheta_j)\right)} 
    %%%
    \leq 16\prod_{j=1}^4 \abs{\sin\left(\frac{\btheta_j}{2} \right)}
    %%%
    \leq \prod_{j=1}^4 \abs{\btheta_j}
    %%%
    \ll \norm{\btheta}_2^4. 
\]

% \subsubsection*{Part 2.}
Note that $\abs{1 + \exp(\imath \theta)}  = 2\cos(\theta/2) \leq 2\exp(-\theta^2/\pi^2)$, because $\cos t\leq \exp(-4t^2/\pi^2)$. 
So, we have 
\[
    \abs{\Lambda_0(\btheta)} \leq 8\cdot \exp\left( - \frac{\norm{\btheta}_2^2}{\pi^2} \right).
\]
and hence
\[
    \abs{\Lambda(\btheta)} 
    %%%
    \leq \abs{\Lambda_0(\btheta)} + \abs{\Lambda(\btheta) - \Lambda_0(\btheta)} 
    %%%
    \leq 8\cdot\exp\left(-\frac{\norm{\btheta}_2^2}{\pi^2} \right) + \bigO{\norm{\btheta}_2^4}.
\]
Fix any constant $c$ satisfying $0<c<1/\pi^2$. Since
$$
8\left(
\exp\left(-c\norm{\btheta}_2^2\right)
-
\exp\left(-\frac{\norm{\btheta}_2^2}{\pi^2}\right)
\right)
\asymp \norm{\btheta}_2^2
\;\text{ as } \btheta\to 0
$$
this difference dominates the
$\bigO{\norm{\btheta}_2^4}$ term for sufficiently small $\norm{\btheta}_2$.  
Therefore, by choosing the threshold $\tau>0$ sufficiently small, we obtain $\abs{\Lambda(\btheta)}\leq 8\cdot\exp\left(-c\norm{\btheta}_2^2\right)$ for all $\btheta\in\bTheta_\close$.

\subsection{Spectral Properties of \texorpdfstring{$B(\btheta)$}{B(theta)} and Proof of \claimref{small-faraway}
}
\label{sec:spec-prop}

In this section, we prove the following spectral properties of the matrix $B(\btheta)$.
\begin{enumerate}
    \item When $\btheta\in\bTheta_\close$, its largest eigenvalue behaves as:
        \begin{equation}\label{eq:close-behav}
            \abs{\Lambda(\btheta) - \Lambda_0(\btheta)} 
                \ll  \abs{\prod_{j=1}^4\left(1-\exp(\imath\btheta_j)\vphantom{2^{2^2}}\right) }.
        \end{equation}
    \item (\claimref{small-faraway}) Its spectral radius is separated from $8$ when $\btheta\in\bTheta_\far$.
        
\end{enumerate}
%%%
In the sequel, we use $\bu=(u_1, u_2, u_3, u_4)$ defined by $u_j:= \exp(\imath \btheta_j)$, for $j\in\{1,2,3,4\}$. 
Recall from \sectionref{gen}, we have:
%%%
\begin{equation}
\label{eq:B-def}
    B(\bu) = \begin{pmatrix}
        pr+1 & s    & q    & qs \\
        p    & ps+r & 0    & qr \\
        r    & 0    & qr+p & ps \\
        1    & s    & q    & qs+pr
    \end{pmatrix}
    \;\text{ where }\;\;
    \begin{aligned}
        p &= u_1+u_2, & q &= u_1u_2, \\
        r &= u_3+u_4, & s &= u_3u_4.
    \end{aligned}
\end{equation}
%%%
Our objective is to investigate the dominant eigenvalue $\Lambda(\bu)$ of this matrix. 

Consider a sufficiently small neighborhood parameterized by $\tau>0$:
\[
    \cN \defeq \left\{ \; \bu \;\colon\; \abs{1 - \bu_j}\leq \tau, \text{ for every }j\in\{1,2,3,4\} \vphantom{2^{2^2}} \; \right\}.
\]
Note that $B(\bm 1)$ has eigenvalues $8, 4, 4, 2$ where $\bm 1 := (1,1,1,1)$.
There is an analytic function $\Lambda(\bu)$ such that its magnitude is $> 7$ and is the largest eigenvalue of $B(\bu)$, for any $\bu\in N$. 
All other eigenvalues of $B(\bu)$ are $< 5$ in magnitude. 

Consider the auxiliary polynomial 
$$
P(\bu;z) := \det( \diag(z,z,z,z) -B(\bu) )
$$
Its roots represent the eigenvalues corresponding to $\bu$. 
Note that $P(\bu;z)$ is a function such that the partial derivative with respect to $z$ at $(\bm 1,8)$ is not equal to $0$. 
Therefore, by the implicit function theorem, there exists an analytic function $\Lambda(\bu)$ such that
\[
    P(\bu; \Lambda(\bu) )=0,\text{ and }\Lambda(\bm 1)=8
\] 
in a small neighborhood of $\bm 1$.

Let us choose two contours:
\[
    \Gamma_{\textrm{high}} \defeq \{z \colon \abs{z-8}=1\} \;\text{ and }\; 
    \Gamma_{\textrm{rest}} \defeq \{z \colon \abs{z} = 5\} .
\]

Now, on both these contours, $\abs{P(\bm 1;z)}=\abs{(z-8)(z-4)^2(z-2)} > 0$. Therefore, for $\bu$ very close to $\bm 1$, since $P(\bu;z)$ changes continuously with $\bm u$, we can make
$$
    \abs{P(\bu;z) - P(\bm 1;z)} < \abs{P(\bm 1;z)}
$$
on each contour. Then, by Rouch\'e's theorem, inside both the contours, $P(\bu;z)$ and $P(\bm 1;z)$ have the same number of roots, counted with multiplicity. Therefore, together with the continuity of spectral radius and the fact that $\Lambda(\bm 1)=8$, we can assume, without loss of generality, by contracting the neighborhood of $\bu$ around $\bm 1$ appropriately (if necessary) that 
\begin{enumerate}
    \item We have an analytic function $\Lambda(\bu)$ denoting the largest eigenvalue of $B(\bu)$ which remains in the proper interior of $\Gamma_{\textrm{high}}$.
    \item All remaining eigenvalues of $B(\bu)$ are in the proper interior of $\Gamma_{\textrm{rest}}$.
\end{enumerate}  

Therefore, we get 
$$
\trace(B(\bu)^n) = \Lambda(\bu)^n + \bigO{5^n}
$$

Recall from \equationref{baseline} that $\Lambda_0(\bu) = \frac12 \prod_{j=1}^4 \left(1+\bu_j\right)$. 
Define $\bu'$ as $\bu$ restricted to $\bu_1=1$. 
Then we have the identity:
\[
    B(\bu')\cdot \bm 1 = \Lambda_0(\bu')\cdot \bm 1.  
\]
Because every row of $B(\bu')$ sums to the same number when $\bu_1 = 1$, and that common number is $\Lambda_0(\bu')$. 
So, $\Lambda_0(\bu')$ is an eigenvalue of $B(\bu')$.
At $\bu'=\bm 1$, we have $\Lambda_0(\bm 1)=8$, which coincides with $\Lambda(\bm 1)$. After shrinking the neighborhood $\cN$ if necessary, $\Lambda_0(\bu')$ lies inside $\Gamma_{\textrm{high}}$. By the
spectral separation above, $B(\bu')$ has exactly one eigenvalue inside
$\Gamma_{\mathrm{high}}$, namely $\Lambda(\bu')$. Therefore, $\Lambda(\bu') = \Lambda_0(\bu')$.

From this, we conclude that $(1-\bu_1)$ divides $\Lambda(\bu) - \Lambda_0(\bu)$. Likewise, by symmetry, each factor
$(1-\bu_j)$ divides $\Lambda(\bu)-\Lambda_0(\bu)$ for $j\in\{1,2,3,4\}$. Consequently, 
\[
    \Lambda(\bu) - \Lambda_0(\bu) = \left(\prod_{j=1}^4 (1-\bu_j)\right) \cdot H(\bu),
\]
for some analytic function $H(\bu)$. 
The maximum magnitude of $H(\bu)$ is bounded in the neighborhood $\cN$, and this proves our first spectral property (\equationref{close-behav}). 

Next, we prove the spectral gap when $\btheta\in\bTheta_\far$. 
For this, we need the following result.
%%%
\begin{theorem}[Frobenius-Wielandt comparison theorem {\cite[Theorem 2.14, p. 31]{berman1994nonnegative}} \!\!]
\label{thm:frobenius-wielandt}
    Let $A$ be an irreducible non-negative matrix, and let $B$ be a complex matrix satisfying $\abs{B_{i,j}} \leq A_{i,j}$, entrywise. 
    Then
    \[
        \rho(B) \leq \rho(A).
    \]
    More generally, every eigenvalue $\gamma$ of $B$ satisfies $\abs \gamma \leq \rho(A)$. 
    If equality holds for some eigenvalues $\gamma$ of $B$, that is, if $\abs\gamma = \rho(A)$, then there exists a diagonal unitary matrix $ D$ and a real number $\phi$ such that
    \[
        B = \exp(\imath\phi) DAD^{-1} \text{ and }\exp(\imath\phi) = \frac{\gamma}{\rho(A)}.
    \]
    In particular, if $\rho(B)=\rho(A)$, then $B = \exp(\imath\phi)DAD^{-1}$ for some real $\phi$ and diagonal unitary matrix $D$. 
\end{theorem}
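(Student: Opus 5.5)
The plan is to run the classical Perron--Frobenius argument using a strictly positive left Perron vector of $A$. The inequality is a short averaging step; the equality case then follows by tracing where every inequality used must in fact be an equality.

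\textbf{The bound.} Since $A$ is irreducible and non-negative, the Perron--Frobenius theorem gives a vector $y$ with all entries strictly positive and $y^{\mathsf T}A=\rho(A)\,y^{\mathsf T}$. Let $\gamma$ be an eigenvalue of $B$ with eigenvector $x\neq 0$, and write $\abs{x}$ for the entrywise modulus. Taking moduli in $\gamma x_i=\sum_j B_{i,j}x_j$ and using $\abs{B_{i,j}}\leq A_{i,j}$ gives, entrywise,
\[
  \abs{\gamma}\,\abs{x}\;\leq\;\abs{B}\,\abs{x}\;\leq\;A\abs{x}.
\]
Pairing with $y$ yields $\abs{\gamma}\,y^{\mathsf T}\abs{x}\leq \rho(A)\,y^{\mathsf T}\abs{x}$. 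Since $y>0$ and $\abs{x}\neq 0$, we have $y^{\mathsf T}\abs{x}>0$, so $\abs{\gamma}\leq\rho(A)$. Applying this to every eigenvalue gives $\rho(B)\leq\rho(A)$.

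\textbf{Equality forces $\abs{B}=A$.} Suppose $\abs{\gamma}=\rho(A)$. The vector $A\abs{x}-\rho(A)\abs{x}$ is entrywise non-negative, and its pairing with the strictly positive $y$ vanishes, so it is zero. Thus $A\abs{x}=\rho(A)\abs{x}$. Irreducibility then makes $\abs{x}$ a Perron vector, so every entry of $\abs{x}$ is strictly positive. The entrywise chain above is now an equality chain, so $(A-\abs{B})\abs{x}=0$. Because $A-\abs{B}\geq 0$ entrywise and $\abs{x}>0$, this forces $\abs{B}=A$.

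\textbf{Equality forces phase alignment.} This is the step needing care, though it is elementary. Write $x=D\abs{x}$ with $D=\diag(d_1,\dots,d_n)$ and $\abs{d_j}=1$, which is possible since no $x_j$ vanishes, and put $\exp(\imath\phi)=\gamma/\rho(A)$. In row $i$ we have
\[
  \Bigl|\sum_j B_{i,j}x_j\Bigr| \;=\; \rho(A)\abs{x_i} \;=\; \sum_j \abs{B_{i,j}}\,\abs{x_j},
\]
so the triangle inequality is tight. Hence every nonzero term $B_{i,j}x_j$ has the same argument as their sum $\gamma x_i=\exp(\imath\phi)\rho(A)d_i\abs{x_i}$. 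Concretely, $B_{i,j}d_j=\exp(\imath\phi)\,d_i\abs{B_{i,j}}=\exp(\imath\phi)\,d_iA_{i,j}$ for every pair $(i,j)$; this holds trivially when $A_{i,j}=0$, since then $B_{i,j}=0$. In matrix form this reads $B=\exp(\imath\phi)DAD^{-1}$, as claimed.

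The final assertion follows by applying the equality case to an eigenvalue $\gamma$ of $B$ with $\abs{\gamma}=\rho(B)=\rho(A)$.
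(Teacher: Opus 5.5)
Your proof is correct and complete. The paper does not prove this statement itself but cites Berman--Plemmons, Theorem~2.14, and your argument is essentially the standard Wielandt proof given there: a positive left Perron vector (equivalently, subinvariance) gives $|\gamma|\le\rho(A)$, equality forces $|x|>0$ and $|B|=A$, and tightness of the triangle inequality then aligns the phases into $B=\exp(\imath\phi)DAD^{-1}$. The only point left implicit is that $\rho(A)>0$, which is needed to define $\gamma/\rho(A)$; this holds for every irreducible $A$ under the usual convention that the $1\times 1$ zero matrix is reducible.
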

%%%
We emphasize that this result shows that $\rho(B(\bu))$ is far from $8$ for {\em any} $\bu$ far away from $\bm1$.  

Let $A := B(\bm 1)$.
$B(\bu)$ consists of sums of monomials with coefficients $0$ or $1$, so on the torus we have $|B(\bu)_{i,j}| \leq B(\bm 1)_{i,j} = A_{i,j}$. 
Note that $A$ is non-negative and irreducible because $A^2$ has all non-zero entries. 
Hence, we can invoke \theoremref{frobenius-wielandt} and get
$$
\rho(B(\bu)) \leq \rho(A) = 8.
$$

Let us show that it is impossible to achieve equality for $\bu\neq \bm 1$.
Suppose, for the sake of contradiction that the equality holds 
at some $\bu\neq 1$. 
Then, $B(\bu) = \exp(\imath\phi) DAD^{-1}$ for some real $\phi$.
Since $D$ is unitary and diagonal,
$$
\abs{B(\bu)_{i,j}} = A_{i,j}
$$
Comparing $(2,1)$ elements of $A$ and $B(\bu)$ gets us $\bu_1=\bu_2$, $(3,1)$ elements $\bu_3 = \bu_4$, and $(1,1)$ elements $\bu_3 = \overline{\bu_1}$
Hence, we have $\bu_1=\bu_2 = \overline{\bu_3} = \overline{\bu_4}$, and from this we can conclude that $B(\bu)_{1,4}=1$.

Write $D = \diag(d_1, d_2, d_3, d_4)$. 
Observe that
\begin{align*}
    1= B(\bu)_{1,4} &= \exp(\imath\phi) d_1 \overline{d_4} A_{1,4} = \exp(\imath\phi) d_1 \overline{d_4}\\
    %%%
    1=B(\bu)_{4,1} &= \exp(\imath\phi) d_4 \overline{d_1} A_{4,1} = \exp(\imath\phi) d_4 \overline{d_1}.
\end{align*}
These two constraints imply that $\exp(\imath\phi)\in\{\pm1\}$; i.e., $\exp(\imath \cdot 2\phi)=1$. 
Comparing $(1,2)$ and $(2,1)$ elements of $A$ and $B(\bu)$ gives us $B(\bu)_{1,2} = \overline{\bu_1}^2$ and $B(\bu)_{2,1} = 2\bu_1$ respectively.
We also have 
\begin{align*}
    B(\bu)_{1,2} & = \exp(\imath\phi)d_1 \overline{d_2}A_{1,2}
    \\
    B(\bu)_{2,1} & = \exp(\imath\phi)d_2 \overline{d_1} A_{2,1}
\end{align*}
Upon multiplying them, we get $\overline{\bu_1} = \exp(\imath \cdot 2\phi)$, which we know is $1$.
Altogether, we obtain $\bu_1=1=\bu_2=\overline{\bu_3}=\overline{\bu_4}$, i.e. $\bu=\bm 1$, but this is a contradiction. 

It remains to upgrade this pointwise strict inequality to a uniform separation.
For simplicity, let us revert to the $\btheta$ notation. 
Consider the set 
\[
    S_\tau \defeq \left\{ \btheta\in[-\pi,\pi]^4 \;\colon\; \norm{\btheta}_\infty \geq \tau \right\}.
\] 
Note that $S_\tau$ is a closed subset of the compact set $[-\pi,\pi]^4$. Hence, $S_\tau$ is compact and does not contain
$\mathbf{0}$. Since the spectral radius is a continuous function, the spectral radius of $B(\btheta)$ on $S_\tau$ is strictly smaller than $8$. 
This continuous function attains a maximum at some point in the set $S_\tau$ since it is a compact set. 
Denote this maximum spectral radius in $S_\tau$ by $8\cdot \exp(-c)$, for some positive constant $c$. 
This completes the proof that $\rho(B(\btheta))\leq 8\cdot \exp(-c)$ for $\btheta\in\bTheta_\far$, which is the second desired spectral property (\claimref{small-faraway}). 

\section{Tightness of the Energy Bound}
\label{sec:lower}

Here we prove \theoremref{lower}.
For every $n\geq 5$, we have $w_n\in W$, and $S_{w_n}$ is a nonempty proper subset of $R$. 
Therefore, by Parseval,
\(
  \sum_{t\neq 0}\abs{\widehat{S_{w_n}}(t)}^4>0.
\)
It therefore suffices to prove the lower bound in \theoremref{lower} for all sufficiently large $n$, since the finitely many remaining cases can be absorbed by decreasing the absolute implied constant. 
Throughout the remainder of this section, we assume that $n$ is sufficiently large.

Note that because $N=2^n-1$, we have the following fact: $x\in S_w$ if and only if $2x\in S_w$. 
As a result, we have the identity $\widehat{S_w}(t) = \widehat{S_w}(2t)$.
In particular, we have $\widehat{S_w}(-2^j)$ is identical, for all $j\in\{0,1,\dotsc, n-1\}$. 
For \(0\le j<k<n\), we have
    \[
        0<2^k-2^j<2^n-1=N,
    \]
and hence \(-2^j\not\equiv -2^k\pmod N\). 
Thus these are \(n\) distinct nonzero frequencies.
We remark that in general the set $\{2^j t \;\colon\; 0\leq j<n\}$ need not have cardinality $n$.

After that, restricting to these specific $n$ frequencies, we get:
\[
    \sum_{t\neq 0} \abs{\widehat{S_{w_n}}(t)}^4 \geq n \abs{\widehat{S_{w_n}}(-1)}^4.
\]
Thus, it suffices to prove that 
\[
    \abs{\widehat{S_{w_n}}(-1)} \gg n^{-1}.
\]

To that end, consider $X$ to be uniformly distributed over $S_{w_n}$. 
Note that we can rewrite 
\[
    \widehat{S_{w_n}}(-1) = \frac{\binom n{w_n}}N \cdot \expect{\mye_N(X)}.
\]

After that, we estimate these two parts on the RHS as follows: 
\begin{claim}\label{clm:binom-lower}
    \[
        \binom n{w_n} \gg 2^n\cdot n^{-1/2} \asymp N\cdot n^{-1/2}.
    \]
\end{claim}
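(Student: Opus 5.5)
We need to show $\binom{n}{w_n}\gg 2^n n^{-1/2}$ for $w_n=\lfloor n/2+\sqrt n\rfloor$. The plan is to compare with the central binomial coefficient and control the ratio by a product estimate, which avoids invoking any local limit theorem.

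Write $m=\lfloor n/2\rfloor$ and $k=w_n-m$, so that $0\le k\le \sqrt n+1$. First, we will use the standard bound $\binom{n}{m}\gg 2^n n^{-1/2}$. This follows from Stirling's formula, or from the elementary estimate $\binom{2r}{r}\ge 4^r/(2\sqrt r)$ together with $\binom{2r+1}{r}=\tfrac12\binom{2r+2}{r+1}$.

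Next, we will write the ratio as a product:
\[
\frac{\binom{n}{m+k}}{\binom{n}{m}}=\prod_{i=1}^{k}\frac{n-m-i+1}{m+i}.
\]
Each factor satisfies
\[
\frac{n-m-i+1}{m+i}\ge \frac{n/2-i}{n/2+i}=\frac{1-2i/n}{1+2i/n}.
\]
Since $i\le k\le\sqrt n+1$, we have $2i/n\le 1/2$ for $n$ large. In that range, $\log\frac{1-x}{1+x}\ge -3x$ for $0\le x\le 1/2$. Summing over $i$ gives
\[
\log(\text{ratio})\ge -\frac{6}{n}\sum_{i\le k} i\ge -\frac{3k(k+1)}{n}\ge -C
\]
for an absolute constant $C$, because $k=O(\sqrt n)$. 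Hence the ratio is at least $e^{-C}$, and $\binom{n}{w_n}\gg 2^n n^{-1/2}$.

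Finally, $2^n\asymp N$, which gives the stated $\asymp N n^{-1/2}$. No step here is a real obstacle. The only care needed is to check that $k=O(\sqrt n)$ makes the quadratic exponent $k^2/n$ bounded. This is exactly why the weight window $n/2+\Theta(\sqrt n)$ keeps the slice density of order $n^{-1/2}$.
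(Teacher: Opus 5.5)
Your proof is correct and follows essentially the same route as the paper: you compare with the central binomial coefficient, write the ratio as a telescoping product, and bound its logarithm below by $-O(k^2/n)=-O(1)$. The only difference is cosmetic: you treat both parities at once via $m=\lfloor n/2\rfloor$ and the factor bound $\frac{1-2i/n}{1+2i/n}$, whereas the paper splits into even and odd $n$ and passes between them using $\binom{2m}{m+i}\le\binom{2m+1}{m+1+i}\le 2\binom{2m}{m+i}$.
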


\begin{claim}\label{clm:hypergeom-lower}
    \[
        \abs{ \vphantom{2^{2^2}}\expect{\mye_N(X)} } \gg n^{-1/2} .
    \]
\end{claim}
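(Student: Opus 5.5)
The statement to prove is \claimref{hypergeom-lower}: $\abs{\expect{\mye_N(X)}}\gg n^{-1/2}$. I would follow the strategy sketched in the introduction: keep only the top $m=\lceil C\log_2 n\rceil$ binary digits of $X$, and approximate them by independent Bernoulli bits of bias $p=w_n/n$.

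\textbf{Step 1: truncation.} Write $X=\sum_{i=0}^{n-1}x_i2^i$ with $x\in\{0,1\}^n$ uniform among strings of weight $w_n$. Split $X=Y+Z$, where $Y=\sum_{i\geq n-m}x_i2^i$ and $Z<2^{n-m}$. Then $\abs{\mye_N(X)-\mye_N(Y)}\leq 2\pi Z/N\ll 2^{-m}\leq n^{-C}$. So it suffices to bound $\abs{\expect{\mye_N(Y)}}$ from below.

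\textbf{Step 2: replacing the top digits by independent bits.} The top block $(x_{n-m},\dots,x_{n-1})$ is hypergeometric: the law of a uniformly random $m$-subset pattern drawn without replacement. Its total variation distance from $\mathrm{Bernoulli}(p)^{\otimes m}$ is $\ll m^2/n$, by the standard sampling-without-replacement comparison, or directly from ratios of binomial coefficients. This error is $\ll (\log n)^2/n=o(n^{-1/2})$. Under the product law,
\[
\mathbb E\,\mye_N(Y')=\prod_{k=1}^{m}\left(1-p+p\,\mye_N(2^{n-k})\right).
\]
Since $2^{n-k}/N=2^{-k}+O(2^{-n})$, the $k$-th factor is $1-p+p\exp(2\pi\imath 2^{-k})+O(2^{-n})$.

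\textbf{Step 3: the product, which is the main obstacle.} I need $\abs{1-p+pe^{i\alpha}}^2=1-2p(1-p)(1-\cos\alpha)$. Here $p=1/2+\epsilon$ with $\epsilon\asymp n^{-1/2}$, and the product must be bounded below by $\gg n^{-1/2}$ rather than shown to be tiny.

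The $k=1$ factor is $1-2p=-2\epsilon$, of modulus $\asymp n^{-1/2}$. This is exactly where the choice $w_n=\floor{n/2+\sqrt n}$ matters: at $p=1/2$ this factor would vanish. Concretely $\epsilon=(\floor{n/2+\sqrt n}-n/2)/n\geq(\sqrt n-1)/n$, so $\abs{1-2p}\geq 2(\sqrt n -1)/n\gg n^{-1/2}$.

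For $k\geq2$, $\alpha=2\pi2^{-k}\leq\pi/2$, so each factor has modulus at least $\sqrt{1-\tfrac12(1-\cos\alpha)}$. Moreover $1-\cos\alpha\ll 4^{-k}$, so $\prod_{k\geq2}$ is bounded below by an absolute positive constant, since $\sum_k 4^{-k}<\infty$ and each factor is bounded away from $0$. The $O(2^{-n})$ perturbations change the product by $O(m2^{-n})$.

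Hence $\abs{\mathbb E\,\mye_N(Y')}\gg n^{-1/2}$. Combining with the errors $n^{-C}$ and $O((\log n)^2/n)$ from Steps 1 and 2 gives the claim for large $n$. The delicate point is checking that the hypergeometric total-variation error, $O(m^2/n)$, is genuinely $o(n^{-1/2})$ with a uniform constant; I would verify it by a direct ratio estimate $\binom{n-m}{w-j}/\binom nw=p^j(1-p)^{m-j}(1+O(m^2/n))$.
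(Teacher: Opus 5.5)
Your proposal is correct and follows essentially the same route as the paper. The paper truncates to the top $u\asymp\log n$ digits at cost $O(2^{-u})$, and replaces the without-replacement top block by i.i.d.\ $\mathrm{Bernoulli}(w_n/n)$ bits at cost $2\,p(C)\leq 2\binom{u}{2}/n$ via a collision coupling. It then bounds the product below by $|1-2p_n|\prod_{j\geq 2}\cos(\pi/2^j)$, which is exactly your bound since $\sqrt{1-\tfrac12(1-\cos\alpha)}=\cos(\alpha/2)$. The differences are cosmetic: the paper truncates through the repeating expansion $X/N=0.\overline{X_1\cdots X_n}$ in base $2$, which removes your separate $O(2^{-n})$ perturbation step. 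It also evaluates $\prod_{j=2}^{u}\cos(\pi/2^j)\geq 2/\pi$ by the telescoping sine identity rather than by summability of $4^{-k}$.
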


From these two claims, it follows that $\abs{\widehat{S_{w_n}}(-1)} \gg n^{-1}$; hence proving \theoremref{lower}.

\begin{proof}[Proof of \claimref{binom-lower}]
The proof follows from basic facts:
\begin{enumerate}
    \item For the central binomial coefficient, we have~\cite[p. 96, Proposition 3.6.2]{matousek2008invitation}: 
        \[
            \binom {2m}m \asymp 2^{2m} \cdot m^{-1/2}.
        \]
    \item For $0\leq 2i/m \leq 1/2$, the following expression holds:
        \[
            \binom{2m}{m+i} 
                %%%
                = \binom {2m}m \prod_{j=1}^i \frac{m+1-j}{m+j} 
                %%%
                \geq \binom {2m}m \prod_{j=1}^i \left(1 - \frac{2j}{m}\right)
                %%%
                \geq \binom {2m}m  \cdot \exp\left(-4i^2/m\right)
        \]
            Here we have used Jensen's inequality
            \(
                \ln(1-u) \geq -\ln4 \cdot u ,
            \)
            when $u\in[0,1/2]$. 
    \item Finally, by definition:
        \[
            \binom {2m}{m+i} \leq \binom{2m+1}{m+1+i} \leq 2\binom {2m}{m+i}.
        \]
\end{enumerate}
    These three inequalities can be put together to prove the claim. 

    \paragraph{Case: Even $n$.}
    Suppose $n=2m$ and $w_n = \floor{m + \sqrt n}$. 
    Define $i_n = w_n - m = \floor{\sqrt n}$.
    Now, when $2i_n/m \leq 1/2$ (which holds for $n\geq 56$), we have
    \[
        \binom n{w_n} \gg 2^n\cdot n^{-1/2}.
    \]

    \paragraph{Case: Odd $n$.}
    Suppose $n=2m+1$ and $w_n = \floor{m + 1/2 + \sqrt n}$.
    Define $i_n = w_n- (m+1) = \floor{\sqrt n - 1/2}$. 
    Now, when $2i_n/m \leq 1/2$ (which holds for $n\geq 57$), we have
    \[
        \binom n{w_n} \asymp \binom{2m}{m+i_n} \gg 2^n\cdot n^{-1/2}.
    \]
    This completes the proof of \claimref{binom-lower}.
\end{proof}

\begin{proof}[Proof of \claimref{hypergeom-lower}]
    Recall $X$ denotes the uniform distribution over $S_{w_n}$, and consider the binary representation of the integer $X = X_12^{n-1}+\dotsi +X_{n-1}2^1+ X_n2^0$, where $X_1, \dotsc, X_n \in \bin$. 
    The random variable $X_i$ denotes the $i$-th most significant bit in the $n$-bit binary expansion of the integer $X$. 

    Because of the special form of $N=2^n-1$, note that the binary representation of the fraction $X/N\in[0,1)$ has the repeating pattern
    \[
        0.\overline{X_1 X_2 \dotsc X_{n-1} X_n} \;_2.
    \]
    For every integer $0< u \leq  n$, let $\theta_u$ denote the random variable corresponding to the number $0.X_1X_2\dotsc X_u$

    As our first reduction, we claim:
    %%%
    \begin{claim}\label{clm:first-u}
        \[
            \abs{ \expect{\mye_N(X)} - \expect{\exp(2\pi\imath \cdot \theta_u)}  \vphantom{2^{2^2}} } \ll 2^{-u}.
        \]
    \end{claim}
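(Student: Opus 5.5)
We prove the claim by a pathwise comparison, valid for every realization of the random bits, and then take expectations.

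Since $X\in\{0,\ldots,N-1\}$, we have $\mye_N(X)=\exp(2\pi\imath X/N)$. The fraction $X/N$ has the periodic binary expansion $0.\overline{X_1X_2\cdots X_n}_2$, so it agrees with $\theta_u=0.X_1\cdots X_u$ in its first $u$ binary digits. Concretely, write
\[
\frac XN=\theta_u+\sum_{k>u}X_{((k-1)\bmod n)+1}\,2^{-k}.
\]
Each bit is $0$ or $1$, so the tail sum lies in $[0,2^{-u}]$. Hence, deterministically,
\[
0\leq \frac XN-\theta_u\leq 2^{-u}.
\]
One could also check this directly for $u=n$, where $X/N-X/2^n=X/(N2^n)<2^{-n}$, and for $u<n$ via truncation. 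The series argument, however, covers all $0<u\leq n$ uniformly.

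Next, the map $x\mapsto\exp(2\pi\imath x)$ is $2\pi$-Lipschitz on $\mathbb R$, because $\abs{e^{\imath a}-e^{\imath b}}\leq\abs{a-b}$. Therefore
\[
\abs{\mye_N(X)-\exp(2\pi\imath\theta_u)}\leq 2\pi\cdot 2^{-u}
\]
holds pointwise. Taking expectations and applying the triangle inequality $\abs{\expect{Y}-\expect{Z}}\leq\expect{\abs{Y-Z}}$ gives the claim with implied constant $2\pi$.

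No step here is a real obstacle. The only point requiring care is the indexing convention: $X_1$ is the most significant bit, so that the periodic expansion of $X/N$ begins with $X_1$. This convention is fixed in the statement, and with it the tail bound is immediate.
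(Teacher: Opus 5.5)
Your proposal is correct and follows the paper's proof essentially verbatim. Both arguments use the periodic expansion $X/N=0.\overline{X_1\cdots X_n}_2$ to get the deterministic bound $0\leq X/N-\theta_u\leq 2^{-u}$, then apply the $2\pi$-Lipschitz bound for $x\mapsto\exp(2\pi\imath x)$ and take expectations; your explicit tail-sum indexing just spells out the truncation step that the paper states in one line.
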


    Thus, it will suffice to estimate $\expect{\exp(2\pi\imath \cdot \theta_u)} $ instead. 
    For the second reduction, we consider the following new random variable: $Y$ is the random variable $Y_12^{-1} + \dotsi + Y_u2^{-u}$, where each $Y_j$ is independently set to $1$ with probability $p_n\defeq w_n/n$; $0$, otherwise. 
    For brevity, we just write this as the random variable $0.Y_1\dotsc Y_u$. 
    Our second reduction claims:
    %%%
    \begin{claim}\label{clm:iid-u}
        \[
            \abs{ \expect{\exp(2\pi\imath \cdot \theta_u)} - \expect{\exp(2\pi\imath \cdot 0.Y_1\dotsi Y_u)} \vphantom{2^{2^2}} } \ll u^2 \cdot n^{-1}.
        \]
    \end{claim}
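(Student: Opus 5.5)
The plan is to prove Claim~\ref{clm:iid-u} by a total-variation (coupling) argument. The Fourier phase is bounded, so it suffices to compare the joint laws of the top $u$ bits. The function $f(x_1,\dots,x_u)=\exp\left(2\pi\imath\cdot 0.x_1\dotsc x_u\right)$ has $\abs f=1$ on $\bin^u$. Hence
\[
  \abs{\expect{f(X_1,\dots,X_u)}-\expect{f(Y_1,\dots,Y_u)}}\leq 2\,d_{\mathrm{TV}}\bigl((X_1,\dots,X_u),(Y_1,\dots,Y_u)\bigr).
\]
It therefore suffices to show that this total variation distance is at most $u^2/(2n)$. No property of $f$ beyond boundedness is needed, so the particular choice of $u$ plays no role here beyond $u\leq n$.

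The first step is to identify both laws as urn samples. Since $X$ is uniform on $S_{w_n}$, the bit string $(X_1,\dots,X_n)$ is a uniformly random arrangement of $w_n$ ones and $n-w_n$ zeros. Its first $u$ coordinates therefore have the law of $u$ draws \emph{without} replacement from an urn of $n$ balls, of which $w_n$ are marked $1$; by exchangeability the positions used are irrelevant. On the other hand, $(Y_1,\dots,Y_u)$ is i.i.d.\ Bernoulli$(p_n)$ with $p_n=w_n/n$. That is exactly the law of $u$ draws \emph{with} replacement from the same urn, so the matching of bias $p_n=w_n/n$ is what makes the comparison exact.

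The second step is the coupling. Let $J_1,\dots,J_u$ be i.i.d.\ uniform indices in $\{1,\dots,n\}$ labelling balls of a fixed urn, and let $Y_k$ be the colour of ball $J_k$. Let $E$ be the event that the $J_k$ are pairwise distinct. Conditioned on $E$, the tuple $(J_1,\dots,J_u)$ is a uniformly random injective sequence, so the colours have exactly the without-replacement law. We then define the coupled $X$-sample to equal the $Y$-sample on $E$, and to be an independent without-replacement sample off $E$. This gives
\[
  d_{\mathrm{TV}}\leq \Pr[E^c]\leq \sum_{1\leq k<l\leq u}\Pr[J_k=J_l]=\binom u2\frac1n\leq \frac{u^2}{2n}.
\]
Combined with the first paragraph, this yields the bound $u^2\cdot n^{-1}$ claimed in Claim~\ref{clm:iid-u}.

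There is no serious obstacle. The only point needing care is the exchangeability step: the top $u$ bits of a uniform element of $S_{w_n}$ are distributed as a without-replacement sample irrespective of which $u$ positions are taken. This is immediate because a uniform weight-$w_n$ string is invariant under all coordinate permutations. As an alternative, one could compare the explicit densities $\binom{n-u}{w_n-k}/\binom n{w_n}$ and $p_n^k(1-p_n)^{u-k}$ directly. That route is messier, and the coupling above avoids it.
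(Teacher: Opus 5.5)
Your proof is correct and follows essentially the same argument as the paper. Both model the top $u$ bits as draws without replacement and $Y_1,\dots,Y_u$ as draws with replacement from the same urn of $n$ balls, and both bound the discrepancy by twice the collision probability, which is at most $2\binom u2/n$ by the union bound; the only difference is that you package this via total variation and an explicit coupling, whereas the paper directly splits the expectation according to whether a collision occurs.
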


    Note that the following quantity $P(u)$ is identical to $\expect{\exp(2\pi\imath \cdot 0.Y_1\dotsi Y_u)}$
        \[
            P(u) \defeq \prod_{j=1}^u \left( (1-p_n) + p_n\cdot \exp\left(\frac{2\pi\imath }{2^j}\right)\right)
        \]
    This quantity is estimated below. 
    \begin{claim}\label{clm:ideal-est}
        \[
            \abs{P(u)} \gg \abs{1-2p_n} \gg n^{-1/2}.
        \]
    \end{claim}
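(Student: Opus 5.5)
We need to prove Claim (ideal-est): $|P(u)| \gg |1-2p_n| \gg n^{-1/2}$, where $P(u)=\prod_{j=1}^u \bigl((1-p_n)+p_n e^{2\pi i/2^j}\bigr)$ and $p_n=w_n/n$ with $w_n=\lfloor n/2+\sqrt n\rfloor$.

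The plan is to bound each factor of $P(u)$ from below in modulus and multiply. We first compute the modulus of a single factor. For $p\in[0,1]$ and angle $\alpha$,
\[
\abs{(1-p)+p\,\mye^{\imath\alpha}}^2 = 1-2p(1-p)(1-\cos\alpha) = 1-4p(1-p)\sin^2(\alpha/2).
\]
Since $4p(1-p)=1-(1-2p)^2$, this equals $\cos^2(\alpha/2)+(1-2p)^2\sin^2(\alpha/2)$.

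The factor $j=1$ has $\alpha=\pi$, so its modulus is exactly $\abs{1-2p_n}$. This factor is the source of the $n^{-1/2}$ loss.

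For $j\ge 2$ we have $\alpha=2\pi/2^j\le\pi/2$. Dropping the nonnegative second term, the modulus is at least $\cos(\pi/2^j)$. We then use $\prod_{j\ge2}\cos(\pi/2^j)>0$. This follows from $\cos x\ge 1-x^2/2$, $\log(1-y)\ge -2y$ for $y\le 1/2$, and the convergence of $\sum_j 4^{-j}$. Alternatively, Viète's product gives $\prod_{j\ge2}\cos(\pi/2^j)=2/\pi$ exactly. Hence, uniformly in $u$,
\[
\abs{P(u)}\ge \abs{1-2p_n}\prod_{j=2}^{u}\cos(\pi/2^j)\ge \frac{2}{\pi}\abs{1-2p_n}.
\]

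It remains to show $\abs{1-2p_n}\gg n^{-1/2}$. We have $2p_n-1=(2w_n-n)/n$, and $2w_n-n\ge 2(n/2+\sqrt n-1)-n=2\sqrt n-2\ge\sqrt n$ for $n\ge4$. Therefore $\abs{1-2p_n}\ge n^{-1/2}$.

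No step is a real obstacle. The only point needing care is that the bound must be uniform in $u$, since $u$ will later be taken of order $\log n$; the convergent infinite product handles this. We also need $p_n\le 1$, which holds because $w_n\le n-1$ for $n\ge 5$, so the factor formula applies.
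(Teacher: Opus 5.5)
Your proposal is correct and follows the paper's proof essentially step for step. The $j=1$ factor has modulus exactly $|1-2p_n|$, each factor with $j\geq 2$ is at least $\cos(\pi/2^j)$, the product of these is at least $2/\pi$ by Vi\`ete's identity, and $2w_n-n\geq 2\sqrt n-2$ gives the $n^{-1/2}$ bound; your constant $1$ for $n\geq 4$ is slightly better than the paper's $4/7$.
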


    By \claimref{first-u}, \claimref{iid-u}, and \claimref{ideal-est}, there exist absolute constants \(c,C>0\) such that
        \[
            \abs{\expect{\mye_N(X)}\vphantom{2^{2^2}}}
            \geq C n^{-1/2}
                -c\left(2^{-u}+\frac{u^2}{n}\right).
        \]
        Taking \(u=\lceil\log_2 n\rceil\), the second term is \(\smallO{n^{-1/2}}\), and hence
        \[
            \abs{\vphantom{2^{2^2}} \expect{\mye_N(X)}} \gg n^{-1/2}.
        \]
    This gives us \claimref{hypergeom-lower}. 
\end{proof}

%%%
\begin{proof}[Proof of \claimref{first-u}]
    Since \(X/N=0.\overline{X_1\cdots X_n}\;_2\), we have
        \[
            0\leq \frac XN-\theta_u\leq 2^{-u}.
        \]
        Therefore, using
        \(
            \abs{ \exp\left({2\pi \imath x}\right) - \exp\left({2\pi \imath y}\right) }\leq 2\pi\cdot \abs{x-y},
        \)
        \[
        \abs{
                \expect{\mye_N(X)} - \expect{ \exp\left({2\pi \imath\theta_u} \right) } \vphantom{2^{2^2} }
        }
        \leq 2\pi\cdot 2^{-u}.
        \]
\end{proof}

%%% 
\begin{proof}[Proof of \claimref{iid-u}]
    The only difference between the two experiments is that $\theta_u$ corresponds to a ``sampling without replacement'' and $0.Y_1\dotsi Y_u$ corresponds to a ``sampling with replacement'' experiment. 

    Let us formalize it further. 
    Consider $n$ balls, $w_n$ of them are labeled $1$ and the remaining $(n-w_n)$ are labeled $0$. 
    The $\theta_u$ experiment corresponds to sampling $u$ balls sequentially without replacement and writing down the labels of the balls as the binary sequence $0.X_1\dotsi X_u$. 
    Instead, if the balls are chosen with replacement, then their labels form the binary sequence $0.Y_1\dotsi Y_u$. 

    Let $C$ denote the collision event that some ball is picked twice. 
    Conditioned on $C$ not happening (\ie, no ball is picked twice), the distribution of $0.Y_1\dotsi Y_u$ is identical to the distribution of $\theta_u$. 
    Denote the probability of collision by $p(C)$; the complementary, no-collision, probability by $p(\neg C)$.
    Therefore, we conclude:
    %%%
    \begin{align*}
        \expect{\exp(2\pi\imath \cdot 0.Y_1\dotsi Y_u)} 
        %%%
            &= \expect{1(\neg C)\cdot \exp(2\pi\imath \cdot 0.Y_1\dotsi Y_u)}\\
                &\qquad \qquad + \expect{1(C)\cdot \exp(2\pi\imath \cdot 0.Y_1\dotsi Y_u)}
                \tag{separating the count based on $C$ occurring or not}\\
        %%%
            &= p(\neg C) \cdot \expect{\exp(2\pi\imath \cdot \theta_u)} + \expect{1(C)\cdot \exp(2\pi\imath \cdot 0.Y_1\dotsi Y_u)}
                \tag{conditioned on no collision, it's identical to the hypergeometric experiment}\\
        %%%
            &= \expect{\exp(2\pi\imath \cdot \theta_u)} \quad -p(C)\cdot \expect{\exp(2\pi\imath \cdot \theta_u)}\\
                &\qquad\qquad + \expect{1(C)\cdot \exp(2\pi\imath \cdot 0.Y_1\dotsi Y_u)}.
    \end{align*}

    Since we are computing the expectation of only unimodular objects, we conclude that 
        \[
            \abs{ \expect{\exp(2\pi\imath \cdot \theta_u)} - \expect{\exp(2\pi\imath \cdot 0.Y_1\dotsi Y_u)} \vphantom{2^{2^2}} } \leq 2\cdot p(C).
        \]
    Now, the collision probability, by the union bound, is upper-bounded by $p(C)\leq \binom u2\cdot \frac1n$. 
    This proves \claimref{iid-u}. 
\end{proof}

%%% 
\begin{proof}[Proof of \claimref{ideal-est}]
    Note that at $j=1$
        \[
            \abs{ \left(1-p_n) + p_n\exp(2\pi\imath\cdot 2^{-j}\right)} = \abs{1-2p_n} .
        \]
    For $j>1$, we have
    \begin{align*}
        \abs{ \left(1-p_n) + p_n\exp(2\pi\imath/ 2^{j}\right)} ^2
            %%%
            &= (1-p_n)^2 + p_n^2 + 2p_n(1-p_n) \cos(2\pi/2^j)\\
            %%%
            &= 1-4p_n(1-p_n)\sin^2(\pi/2^j)\\
            %%%
            &\geq \cos^2(\pi/2^j)
    \end{align*}

    From these two bounds, we have
    \begin{align*}
        \abs{ P(u)} 
            %%%
            & \geq \abs{1-2p_n}  \cdot \cos(\pi/4) \cos(\pi/8) \dotsi \cos(\pi/2^u) \\
            %%%
            & = \abs{1-2p_n}  \cdot \frac{\cancelto1{\sin(\pi/2)}}{2^{u-1}\sin(\pi/2^u)}\\
            %%%
            & \geq (2/\pi)\cdot \abs{1-2p_n}  .
    \end{align*}
    %%%
    Since
        \(
        w_n \geq \frac n2+\sqrt n -1,
        \)
    we have
    \[
        \abs{1-2p_n}
        =\frac{\abs{2w_n-n}}{n}
        \geq\frac{2\sqrt n-2}{n}
        \geq \frac47\cdot n^{-1/2},
    \]
    for $n\geq 3$, concluding the proof of \claimref{ideal-est}. 
\end{proof}

\bibliographystyle{alphaurl}
\phantomsection % Creates a link anchor at the current vertical position
\bibliography{math}

@article{Aaronson2018,
  author     = {Aaronson, James},
  title      = {Functions with large additive energy supported on a {H}amming sphere},
  year       = {2018},
  eprinttype = {arXiv},
  eprint     = {1805.05295}
}

@article{AlonBourgain2014,
  title={{Additive patterns in multiplicative subgroups}},
  author={Alon, Noga and Bourgain, Jean},
  journal={Geometric and Functional Analysis},
  volume={24},
  number={3},
  pages={721--739},
  year={2014},
  publisher={Springer},
  doi={10.1007/s00039-014-0270-y}
}

@book{AlloucheShallit2003,
  author     = {Allouche, Jean-Paul and Shallit, Jeffrey},
  title      = {Automatic Sequences: Theory, Applications, Generalizations},
  subtitle   = {Theory, applications, generalizations},
  publisher  = {Cambridge University Press},
  address    = {Cambridge},
  year       = {2003},
  pages      = {xvi+571},
  isbn       = {0-521-82332-3; 978-0-521-82332-6},
  review     = {\MR{2081577}},
  doi        = {10.1017/CBO9780511546563},
}

@book{berman1994nonnegative,
  title={Nonnegative Matrices in the Mathematical Sciences},
  author={Berman, Abraham and Plemmons, Robert J.},
  year={1994},
  publisher={Society for Industrial and Applied Mathematics (SIAM)},
  address={Philadelphia, PA},
  isbn={978-0-89871-321-3},
  series={Classics in Applied Mathematics},
  volume={9},
  doi={10.1137/1.9781611971262}
}

@article{Chang2002,
  AUTHOR = {Chang, Mei-Chu},
   TITLE = {A polynomial bound in {F}reiman's theorem},
 JOURNAL = {Duke Math. J.},
  VOLUME = {113},
    YEAR = {2002},
  NUMBER = {3},
   PAGES = {399--419},
    ISSN = {0012-7094},
   CODEN = {DUMJAO},
    MRCLASS = {11P70 (11B13 11B25)},
   MRNUMBER = {1955143},
 MRREVIEWER = {Yuri Bilu},
     DOI = {10.1215/S0012-7094-02-11331-3}
}

@article{Cusick2026,
  author     = {Cusick, Thomas W.},
  title      = {Proof of the {T}u--{D}eng Conjecture},
  year       = {2026},
  eprinttype = {arXiv},
  eprint     = {2608.14821}
}

@article {DiosPontGreenfeldIvanisviliMadrid2023,
    AUTHOR = {de Dios Pont, Jaume and Greenfeld, Rachel and Ivanisvili, Paata and Madrid, Jos\'{e}},
     TITLE = {Additive energies on discrete cubes},
   JOURNAL = {Discrete Anal.},
  FJOURNAL = {Discrete Analysis},
      YEAR = {2023},
     PAGES = {Paper No. 13, 16},
      ISSN = {2397-3129},
   MRCLASS = {11B30 (42B05 52C22)},
       DOI = {10.19086/da.84737}
}

@article{Delsarte1973,
  AUTHOR = {Delsarte, P.},
   TITLE = {An algebraic approach to the association schemes of coding theory},
 JOURNAL = {Philips Res. Rep. Suppl.},
  NUMBER = {10},
    YEAR = {1973},
   PAGES = {vi+97},
    NOTE = {Thesis, Universit\'e Catholique de Louvain, June 1973},
    url={https://users.wpi.edu/~martin/RESEARCH/philips.pdf},
}

@article{DiaconisFulman2009,
  author    = {Diaconis, Persi and Fulman, Jason},
  title     = {Carries, Shuffling, and an Amazing Matrix},
  journal   = {The American Mathematical Monthly},
  volume    = {116},
  number    = {9},
  pages     = {788--803},
  year      = {2009},
  doi       = {10.4169/000298909X474864}
}

@article{Filmus2016,
  author    = {Filmus, Yuval},
  title     = {An Orthogonal basis for functions over a slice of the \protect{B}oolean hypercube},
  journal   = {The Electronic Journal of Combinatorics},
  volume    = {23},
  number    = {1},
  pages     = {P1.23},
  year      = {2016},
  doi       = {10.37236/4567}
}

@inproceedings{EC:FMMOS24,
  author       = {Sebastian Faust and
                  Lo{\"{\i}}c Masure and
                  Elena Micheli and
                  Maximilian Orlt and
                  Fran{\c{c}}ois{-}Xavier Standaert},
  editor       = {Marc Joye and
                  Gregor Leander},
  title        = {Connecting Leakage-Resilient Secret Sharing to Practice: Scaling Trends
                  and Physical Dependencies of Prime Field Masking},
  booktitle    = {Advances in Cryptology - {EUROCRYPT} 2024 - 43rd Annual International
                  Conference on the Theory and Applications of Cryptographic Techniques,
                  Zurich, Switzerland, May 26-30, 2024, Proceedings, Part {IV}},
  series       = {Lecture Notes in Computer Science},
  volume       = {14654},
  pages        = {316--344},
  publisher    = {Springer},
  year         = {2024},
  url          = {https://doi.org/10.1007/978-3-031-58737-5_12},
  doi          = {10.1007/978-3-031-58737-5_12},
  bibsource    = {dblp computer science bibliography, https://dblp.org}
}

@article{Gelfond1968,
  author    = {Gelfond, A. O.},
  title     = {Sur les nombres qui ont des propri{\'e}t{\'e}s additives et multiplicatives donn{\'e}es},
  journal   = {Acta Arithmetica},
  year      = {1968},
  volume    = {13},
  number    = {3},
  pages     = {259-265},
  issn      = {0065-1036},
  url       = {http://eudml.org/doc/204828},
  doi = {10.4064/AA-13-3-259-265}
}

@article{Green2025,
  author    = {Green, Ben},
  title     = {Waring's problem with restricted digits},
  journal   = {Compositio Mathematica},
  volume    = {161},
  year      = {2025},
  number    = {2},
  pages     = {341--364},
  publisher = {Cambridge University Press},
  doi   = {10.1112/S0010437X24007723},
}

@article {HeubergerKropfProdinger2015,
    AUTHOR = {Heuberger, Clemens and Kropf, Sara and Prodinger, Helmut},
     TITLE = {Output sum of transducers: limiting distribution and periodic
              fluctuation},
   JOURNAL = {Electron. J. Combin.},
  FJOURNAL = {The Electronic Journal of Combinatorics},
    VOLUME = {22},
      YEAR = {2015},
    NUMBER = {2},
     PAGES = {Paper 2.19, 53},
      ISSN = {1077-8926},
      MRCLASS = {11B85 (05A16 60F05 68Q45)},
  MRNUMBER = {3359050},
       DOI = {10.37236/5026},
       URL = {https://doi.org/10.37236/5026},
}

@article{Holte1997,
  author    = {Holte, John M.},
  title     = {Carries, combinatorics, and an amazing matrix},
  journal   = {Amer. Math. Monthly},
  volume    = {104},
  year      = {1997},
  number    = {2},
  pages     = {138--149},
  mrnumber  = {1434314},
  doi       = {10.1080/00029890.1997.11990612},
}

@article{Konyagin2001,
  author    = {Konyagin, Sergei Vladimirovich},
  title     = {Arithmetic properties of integers with missing digits: distribution in residue classes},
  journal   = {Periodica Mathematica Hungarica},
  volume    = {42},
  number    = {1--2},
  pages     = {145--162},
  year      = {2001},
  doi       = {10.1023/A:1015256809636}
}

@article{Kovac2023,
  author = {Kova{\v{c}}, Vjekoslav},
  title = {On binomial sums, additive energies, and lazy random walks},
  journal = {J. Math. Anal. Appl.},
  volume = {528},
  year = {2023},
  number = {1},
  pages = {Article 127510, 13},
  doi = {10.1016/j.jmaa.2023.127510},
  url = {https://doi.org/10.1016/j.jmaa.2023.127510}
}

@book {KonyaginShparlinski1999,
    AUTHOR = {Konyagin, Sergei V. and Shparlinski, Igor E.},
     TITLE = {{Character sums with exponential functions and their
              applications}},
    SERIES = {Cambridge Tracts in Mathematics},
    VOLUME = {136},
 PUBLISHER = {Cambridge University Press, Cambridge},
      YEAR = {1999},
     PAGES = {viii+163},
      ISBN = {0-521-64263-9},
   MRCLASS = {11L40 (11L03 11L07)},
  MRNUMBER = {1725241},
MRREVIEWER = {D.\ R.\ Heath-Brown},
       DOI = {10.1017/CBO9780511542930},
       URL = {https://doi.org/10.1017/CBO9780511542930},
}

@article{KirshnerSamorodnitsky2020,
  author  = {Kirshner, Naomi and Samorodnitsky, Alex},
  title   = {On the {$\ell_4:\ell_2$} Ratio of Functions with Restricted \protect{F}ourier Support},
  journal = {J. Combin. Theory Ser. A},
  volume  = {172},
  pages   = {105202, 25 pp.},
  year    = {2020},
  doi     = {10.1016/j.jcta.2019.105202},
  url     = {https://doi.org/10.1016/j.jcta.2019.105202}
}

@article {KaneTao2017,
    AUTHOR = {Kane, Daniel and Tao, Terence},
     TITLE = {A bound on partitioning clusters},
   JOURNAL = {Electronic Journal of Combinatorics},
  FJOURNAL = {The Electronic Journal of Combinatorics},
    VOLUME = {24},
      YEAR = {2017},
    NUMBER = {2},
     PAGES = {Paper No. 2.31, 13},
      ISSN = {1077-8926},
   MRCLASS = {11B30 (05A18 92D15)},
  MRNUMBER = {3665324},
       DOI = {10.37236/6797}
}

@article {Maynard2019,
    AUTHOR = {Maynard, James},
     TITLE = {Primes with restricted digits},
   JOURNAL = {Invent. Math.},
  FJOURNAL = {Inventiones mathematicae},
    VOLUME = {217},
      YEAR = {2019},
    NUMBER = {1},
     PAGES = {127--218},
      ISSN = {0020-9910},
      MRCLASS = {11N05 (11K36 11L07 11N36)},
   MRNUMBER = {3967468},
       DOI = {10.1007/s00222-019-00865-6},
}

@book{matousek2008invitation,
  author    = {Matou\v{s}ek, Ji\v{r}\'{\i} and Ne\v{s}et\v{r}il, Jaroslav},
  title     = {Invitation to Discrete Mathematics},
  edition   = {2},
  publisher = {Oxford University Press},
  address   = {Oxford},
  year      = {2008},
  isbn      = {978-0198570424},
  url       = {https://global.oup.com/academic/product/an-invitation-to-discrete-mathematics-9780198570424}
}

@article {MauduitPomeranceSarkozy2005,
    AUTHOR = {Mauduit, Christian and Pomerance, Carl and S{\'a}rk{\"o}zy, Andr{\'a}s},
     TITLE = {On the distribution in residue classes of integers with a fixed
              sum of digits},
   JOURNAL = {Ramanujan J.},
  FJOURNAL = {The Ramanujan Journal},
    VOLUME = {9},
      YEAR = {2005},
    NUMBER = {1-2},
     PAGES = {45--62},
      ISSN = {1382-4090},
   MRCLASS = {11N69 (11A63 11L07 11N60)},
  MRNUMBER = {2166381},
MRREVIEWER = {Jean-Marie De Koninck},
       DOI = {10.1007/s11139-005-0824-6}
}

@article{MauduitRivat2010,
  author    = {Mauduit, Christian and Rivat, Jo{\"e}l},
  title     = {Sur un probl{\`e}me de {G}elfond: la somme des chiffres des nombres premiers},
  journal   = {Annals of Mathematics},
  volume    = {171},
  year      = {2010},
  number    = {3},
  pages     = {1591--1646},
  doi       = {10.4007/annals.2010.171.1591}
}

@article{MauduitRivatSarkozy2017,
  author   = {Mauduit, Christian and Rivat, Jo{\"e}l and S{\'a}rk{\"o}zy, Andr{\'a}s},
  title    = {On the digits of sumsets},
  journal  = {Canad. J. Math.},
  volume   = {69},
  year     = {2017},
  number   = {3},
  pages    = {595--612},
  mrnumber = {3641775},
  doi      = {10.4153/CJM-2016-007-2}
}

@book {MacWilliamsSloane1977,
    AUTHOR = {MacWilliams, Florence J. and Sloane, Neil J. A.},
     TITLE = {The Theory of Error-Correcting Codes},
    SERIES = {North-Holland Mathematical Library},
    VOLUME = {16},
 PUBLISHER = {North-Holland Publishing Co., Amsterdam-New York-Oxford},
      YEAR = {1977},
     PAGES = {xxii+762},
      ISBN = {0-444-85006-2},
   MRCLASS = {94.02},
  MRNUMBER = {0465509},
MRREVIEWER = {Philippe Piret},
}

@article {MauduitSarkozy1997,
    AUTHOR = {Mauduit, Christian and S{\'a}rk{\"o}zy, Andr{\'a}s},
     TITLE = {On the arithmetic structure of the integers whose sum of digits is fixed},
   JOURNAL = {Acta Arith.},
  FJOURNAL = {Acta Arithmetica},
    VOLUME = {81},
      YEAR = {1997},
    NUMBER = {2},
     PAGES = {145-173},
      ISSN = {0065-1036},
   MRCLASS = {11N60 (11A63 11K65 11N35)},
  MRNUMBER = {1472265},
MRREVIEWER = {G. Turnwald},
       URL = {https://eudml.org/doc/207059},
       doi={10.4064/aa-81-2-145-173},
}

@article{Polyanskiy2019,
  author = {Polyanskiy, Yury},
  title = {Hypercontractivity of Spherical Averages in \protect{H}amming Space},
  journal = {SIAM Journal on Discrete Mathematics},
  volume = {33},
  number = {2},
  pages = {731--754},
  year = {2019},
  publisher = {Society for Industrial and Applied Mathematics},
  doi={10.1137/15M1046575}
}

@article{Shao2026,
  author  = {Shao, Xuancheng},
  title   = {Additive Energies of Subsets of Discrete Cubes},
  journal = {Proc. Roy. Soc. Edinburgh Sect. A},
  volume  = {156},
  number  = {3},
  pages   = {944--965},
  year    = {2026},
  doi     = {10.1017/prm.2024.126},
  url     = {https://doi.org/10.1017/prm.2024.126}
}

@book {TaoVu2006,
    AUTHOR = {Tao, Terence and Vu, Van},
     TITLE = {{Additive combinatorics}},
    SERIES = {Cambridge Studies in Advanced Mathematics},
    VOLUME = {105},
 PUBLISHER = {Cambridge University Press, Cambridge},
      YEAR = {2006},
     PAGES = {xviii+512},
      ISBN = {978-0-521-85386-6; 0-521-85386-9},
   MRCLASS = {11-02 (05-02 05D10 11B13 11P70 11P82 28D05 37A45)},
  MRNUMBER = {2289012},
MRREVIEWER = {Serge\u{\i} V. Konyagin},
       DOI = {10.1017/CBO9780511755149}
}

\end{document}